\newcommand{\vx}{\mathrm{d}{\boldsymbol x}}
\newcommand{\velc}{{\boldsymbol u}}
\newcommand{\velcex}{{\widetilde{\boldsymbol u}}}
\newcommand{\alex}{\widetilde{\alpha}}
\newcommand{\vecfd}{{\boldsymbol F}}
\newcommand{\Cex}{\widetilde{c}}
\newcommand{\hn}{\mathbf{H}_d^1(\Omega(t))}
\newcommand{\hna}{\mathbf{H}_d^1(A)}
\newcommand{\uspace}{H_{\nabla}^{1,u}(D_T)}
\newcommand{\cspace}{H_{\nabla}^{1,c}(D_T)}
\newcommand{\utanspace}{\boldsymbol{\mathrm H}^1_{0,\boldsymbol{\tau}}(\Omega(t))}
\newcommand{\upspace}[1]{\boldsymbol{\mathrm{H}}_{u,p}^{#1}}
\newcommand{\utanspacea}{\boldsymbol{\mathrm H}^1_{0,\boldsymbol{\tau}}(A)}
\newcommand{\stboundary}{\partial D_{T}\backslash ((\{0\}\times\Omega(0))\cup (\{T\}\times\Omega(T)))}
\newcommand{\rigid}{$B{\boldsymbol x} + {\boldsymbol \beta}$}
\newcommand{\astar}{\alpha^{\ast}}
\newcommand{\athr}{\alpha_{\mathrm{thr}}}
\numberwithin{equation}{section}
\theoremstyle{plain}
\newtheorem{theorem}{Theorem}[section]
\theoremstyle{plain}
\newtheorem{proposition}[theorem]{Proposition}
\newtheorem{definition}[theorem]{Definition}
\newtheorem{lemma}[theorem]{Lemma}
\newtheorem{remark}[theorem]{Remark}
\newenvironment{mproof}{\emph{Proof.}\;}{}
\begin{document}

\title{Numerical solution of a two dimensional tumour growth model with moving boundary}
\author[1]{J\'{e}r\^{o}me Droniou\thanks{Email: jerome.droniou@monash.edu}}
\author[2]{Jennifer. A. Flegg\thanks{Email: jennifer.flegg@unimelb.edu.au}}
\author[3]{Gopikrishnan. C. Remesan\thanks{Email: gopikrishnan.chirappurathuremesan@monash.edu}}
\date{\today}
\affil[1]{\small{School of Mathematics, Monash University, Victoria 3800, Australia}}
\affil[2]{\small{School of Mathematics and Statistics, University of Melbourne, Melbourne, Australia}}
\affil[3]{\small{IITB - Monash Research Academy, Indian Institute of Technology Bombay, Powai, India}}
\maketitle
\begin{abstract}
	We consider a biphasic continuum model for avascular tumour growth in two spatial dimensions,  in which a cell phase and a fluid phase follow conservation of mass and momentum. A limiting nutrient that follows a diffusion process controls the birth and death rate of the tumour cells.  The cell volume fraction, cell velocity--fluid pressure system, and nutrient concentration are the model variables. A coupled system of a hyperbolic conservation law, a viscous fluid model, and a parabolic diffusion equation governs the dynamics of the model variables.  The tumour boundary moves with the normal velocity of the outermost layer of cells, and this time--dependence is a challenge in designing and implementing a stable and fast numerical scheme.  We recast the model into a form where the hyperbolic equation is defined on a fixed extended domain and retrieve the tumour boundary as the interface at which the cell volume fraction decreases below a threshold value.  This procedure eliminates the need to track the tumour boundary explicitly and the computationally expensive re--meshing of the time--dependent domains.  A numerical scheme based on finite volume methods for the hyperbolic conservation law, Lagrange $\mathbb{P}_2 - \mathbb{P}_1$ Taylor--Hood finite element method for the viscous system, and mass--lumped finite element method for the parabolic equations is implemented in two spatial dimensions, and several cases are studied. We demonstrate the versatility of the numerical scheme in catering for irregular and asymmetric initial tumour geometries. When the nutrient diffusion equation is defined only in the tumour region, the model depicts growth in free suspension. On the contrary, when the nutrient diffusion equation is defined in a larger fixed domain, the model depicts tumour growth in a polymeric gel. We present numerical simulations for both cases and the results are consistent with theoretical and heuristic expectations such as early linear growth rate and preservation of radial symmetry when the boundary conditions are symmetric. The work presented here could be extended to include the effect of drug treatment of growing tumours.
\end{abstract}

\textbf{Keywords } Two phase model,  Asymmetric tumour growth, Finite element -- Finite volume schemes, Moving boundary.

\textbf{Mathematics Subject Classification } 35Q92, 65M08, 65M50, 5R37.

\section{Introduction}
The initial growth of a proliferating tumour does not contain vascular tissues, which forces the tumour to depend on diffused nutrients from the surrounding environment for its growth. The modelling and numerical simulations of this stage, namely the avascular growth stage, has been a frontier research area since the late 1970s~\cite{Greenspan1976229, ward_1, ward_2}. Depending on the scale of observation -- cellular level (microscopic) or aggregate level (tissue or macroscopic) -- and nature of interactions between the constituents, there are several mathematical approaches and methods to model the avascular growth stage. A detailed review of various models can be found in Roose et al.~\cite{roose_et_al} and Araujo et al.~\cite{Araujo20041039}.

An extensive amount of scientific literature is available regarding the mathematical modelling of avascular tumour growth and multicellular spheroids~\cite{breward_2002,breward_2003,byrne_chaplain_1997,byrne_mcelawin_preziosi_2003,Byrne_prezziozi_2003,scuime}. We focus on models based on mass balance equations, diffusion equations, and continuum mechanics~\cite{Osborne20103402}. Such models are reasonably easy to numerically implement  using appropriate combinations of  finite element methods and finite volume methods. This paper complements the previously mentioned works by relaxing several assumptions and extending to more general situations like asymmetric and irregular initial tumour geometries.

We consider a biphasic and viscous tumour model with a time--dependent spatial boundary in two and three spatial dimensions. The tumour cells constitute a viscous phase called the \emph{cell phase} and the surrounding fluid medium constitute an inviscid phase called the \emph{fluid phase}. The cell and fluid phases actively exchange matter through the processes of cell division and cell death. The diffusing nutrient controls the birth and death rates of the cells. H. M. Byrne et al.~\cite{byrne_mcelawin_preziosi_2003}  considered an early version of this model and C. J. W. Breward et al.~\cite{breward_2002,breward_2003} conducted a detailed study of the one--dimensional version. In these works, the authors present a detailed analysis of the effect of model parameters including the viscosity coefficient of the cell phase, drag coefficient between the cell and fluid phases, and parameters that determine attractive and repulsive forces between the tumour cells. A model based on multiphase mixture theory is described in the work by H. M. Byrne and L. Preziosi~\cite{Byrne_prezziozi_2003}, in which they use a continuous cell--cell force term in contrast to the discontinuous force term in~\cite{breward_2002}. 

The previously mentioned models successfully describe the evolution of tumour radius and the effect of  model parameters. However, to reduce a higher spatial dimensional model to a single spatial dimension, it is assumed that the tumour is growing radially symmetrically. This assumption is not valid if the initially seeded tumour is irregular in shape. Also, the time--dependent boundary is not well defined except in the radially symmetric case.  In this article, we adapt and recast the model in~\cite{byrne_mcelawin_preziosi_2003} such that symmetry assumptions are relaxed, ill--posedness of the time--dependent boundary is  corrected, and numerical simulations are feasible without reducing the dimensionality. 

J. M. Osborne and J. P. Whiteley~\cite{Osborne20103402} developed a generic numerical framework for multiphase viscous flow equations and applied it to simulate tissue engineering models and tumour growth models. Though the numerical scheme presented in~\cite{Osborne20103402} is robust, the tumour growth model considered is ill--posed. Here, the viscous system that governs the cell velocity has a solution unique only up to a (rigid--body motion) function of the form $\boldsymbol{u}(\boldsymbol{x})$ = \rigid, where $B$ is a skew--symmetric matrix, ${\boldsymbol x} \in \mathbb{R}^d$, and ${\boldsymbol
	\beta} \in \mathbb{R}^d$ is a constant. This non--uniqueness for viscous equations with pure traction boundary condition is a well--established fact in the theory of continuum mechanics~\cite[p.~155]{alexander}. At the discrete level, the resulting non--invertibility of the coefficient matrix is overcome by imposing an auxiliary condition. A natural approach is to set the cell velocity at the centre of the tumour to be zero. However, this approach has the following drawbacks. Firstly, the auxiliary condition is not inbuilt with the model; instead, it is a numerical level fix. Secondly, in the case of an asymmetrically shaped tumour a well--defined centre is absent. Even if  we define the centre in a mathematical way, say as the centre of mass, it will vary over time, and consequently, the auxiliary condition as well, thereby making the numerical algorithm computationally intense. Thirdly, fixing the velocity at a single point does not fully eliminate the non--uniqueness. In fact, in two dimensions, even after imposing this condition, solution of the viscous equation is unique only up to functions of the form ${\boldsymbol u}(x,y) = a (y_0-y,x-x_0) + (\alpha_1,\alpha_2)$, where $a \in \mathbb{R}$ is an arbitrary constant and ${\boldsymbol u}(x_0,y_0) = (\alpha_1,\alpha_2)$ for fixed vectors $(x_0,y_0)$ and $(\alpha_1, \alpha_2)$. The function $\boldsymbol{u}$ can be decomposed into the form, $\boldsymbol{u}(x,y) = aB_{\pi/2}(x,y)^T + (ay_0 + \alpha_1, -ax_0 + \alpha_2)$, where the matrix $B_{\pi/2} = \begin{psmallmatrix}
0 & -1 \\1 & 0
\end{psmallmatrix}$ represents the anticlockwise rotation by $\pi/2$ radians. Therefore, $\boldsymbol{u}$ is the sum of a scaled rotation and a translation in the Cartesian plane, and such functions constitute the null space of the linear operator acting on $\boldsymbol{u}$.  In the current work, we circumvent the need for any such numerical fix by ensuring the well--posedness of the viscous system. In particular, we employ appropriate boundary conditions arising from  physical considerations on the model.

P. Macklin and J. Lowengrub~\cite{Macklin2007677} considered a ghost cell method for moving interface problems and applied it to a quasi--steady state reaction--diffusion model. However, the model is defined on a fixed domain, and the time--dependent interface is embedded in this fixed domain. The model we consider has an explicit moving boundary associated with it and hence the scheme in~\cite{Macklin2007677} does not directly apply. M. C. Calzada et al.~\cite{CarmenCalzada20111335} use a fictitious domain method to capture the time--dependent boundary. In a sense, we combine the synergy of both of these works: the time--dependent boundary problem is transformed to a fixed boundary problem without introducing any additional variables as in a level set method. Instead, we use an unknown variable in the model itself to characterise the moving boundary. The major contributions of this article are as follows: 
\begin{enumerate}[label= (\arabic*),ref= (\arabic*),leftmargin=\widthof{()}+3\labelsep]
	\item\label{it:model_1} A mathematically well--defined model that does not assume symmetric tumour growth is developed by adapting previous models.
	\item Two variants of this model depicting the tumour growth in (a) free suspension and (b) \emph{in vivo} surrounded by tissues or \emph{in vitro} in a passive polymeric gel are presented.
	\item We construct an extended model defined in a fixed domain and solutions of this model are used to recover solutions of the original model. Since no additional variables are introduced to achieve this (as in level set methods), the complexity of the model is not increased.
	\item We consider a numerical scheme based on finite volume methods, Lagrange $\mathbb{P}_2-\mathbb{P}_1$ Taylor--Hood finite element method, and mass--lumped finite element methods. The numerical scheme eliminates the need for re--meshing the time--dependent domain at each time step, which makes the computations economical.
	\item The numerical results are consistent with the findings from previous literature. We demonstrate the  versatility of the scheme in simulating initial tumour geometries with irregular and asymmetric shape and tumours with a changing topological structure. 
\end{enumerate}

The paper is organised as follows. In Section~\ref{sec:model_presentation}, we present the model assumptions, variables, and corresponding governing equations. The preliminaries and notations are presented in Section~\ref{sec:prelims}. In Section~\ref{sec:weak_equiv}, we present the notion of weak solutions and the main theorem that yields the equivalence between two different weak solutions in an appropriate sense. In Section~\ref{sec:n_scheme}, we provide the discretisation of the spatial and temporal domains and details of the numerical scheme. In Section~\ref{sec:n_results}, we apply the numerical scheme presented in Section~\ref{sec:n_scheme} to cases under different growth conditions and discuss the results in detail along with the scope for future research.

\section{Model presentation}
\label{sec:model_presentation}
The temporal and spatial variables are respectively denoted by $t$ and ${\boldsymbol x} := (x_{i})_{i=1,\ldots,d}$ $(d = 2\text{ or 3})$ in the sequel. All equations and parameters are presented in dimensionless form. In the case $d=2$, we take ${\boldsymbol x} = (x,y)$. At time $t \in (0,T)$, the tumour occupies the spatial domain $\Omega(t)$ in $\mathbb{R}^d$. The initial domain $\Omega(0)$ is a part of the given data. The tumour occupies the time--space domain  $D_T := \cup_{t \in (0,T)} (\{t\} \times \Omega(t))$. We assume that $\Omega(t)$ is a bounded domain with a $\mathscr{C}^{1}$--regular boundary~\cite[p.~627]{EvansPDE} given by $\Gamma(t) = \partial \Omega(t)$ for $t \in [0,T)$.  The time--dependent boundary $B_T := \stboundary$ of $D_T$ is also assumed to be $\mathscr{C}^1$--regular with respect to the time and space variables (see Figure~\ref{fig:2dgeometry}). Let $\Omega_\ell = (-\ell,\ell)^d$ be a domain in $\mathbb{R}^d$ such that $\Omega(t) \subset \Omega_\ell$ for every $t \in [0,T)$, which ensures $D_T \subset \mathscr{D}_T = (0,T) \times \Omega_\ell$. Let  ${\boldsymbol n}_{\vert{\Gamma(t)}}$ be the  unit normal to $\Gamma(t)$ pointing from $\Omega(t)$ and ${\boldsymbol n}_{\vert B_T}$ be the (time--space) unit normal to  $B_T$ pointing from $D_T$. If $\Omega(t) \subset \mathbb{R}^2$, then ${\boldsymbol \tau}_{\vert\Gamma(t)}$ denotes the unit tangent vector to $\Gamma(t)$. The projection of ${\boldsymbol u}$ on the tangent space of $\partial A$, where $A \subset \mathbb{R}^d$ is denoted by $\boldsymbol{u}_{\partial A,{\boldsymbol \tau}}$, which is defined by
$\boldsymbol{u}_{\partial A,{\boldsymbol \tau}} := ({\boldsymbol u}_{\vert \partial A}\cdot\boldsymbol{\tau}_{\vert \partial A}){\boldsymbol \tau}_{\vert \partial A}$ in two spatial dimensions and $\boldsymbol{u}_{\partial A,{\boldsymbol \tau}} := {\boldsymbol n}_{\vert \partial A} \times ({\boldsymbol u}_{\vert \partial A} \times {\boldsymbol n}_{\partial A})$ in three spatial dimensions.

\begin{figure}[h!] 
	\centering
	\includegraphics[scale=0.8]{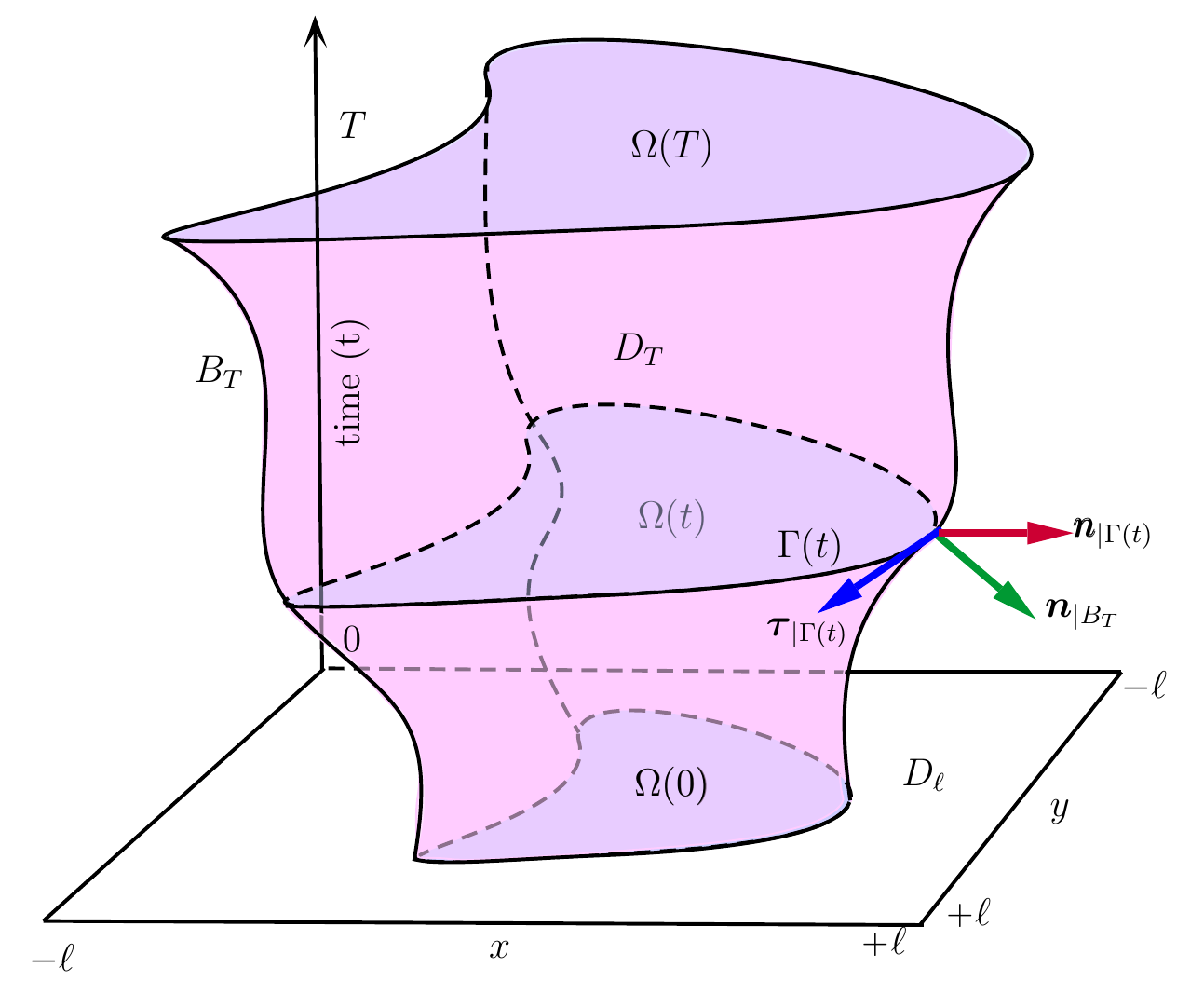}
	\caption{Three dimensional time--space domain occupied by the tumour. Here, $2\ell$ is the side length of the square $\Omega_\ell \subset \mathbb{R}^2$, $T$ is the final time of tumour growth, $\Omega(t) \subset \Omega_\ell$ is the domain occupied by the tumour at time $t$, $\Gamma(t)$ is the boundary of $\Omega(t)$, $D_T$ is the time--space domain $\cup_{0 < t < T} (\{t\} \times \Omega(t))$, $B_T$ (pink envelope) is the evolving boundary given by $\stboundary$, and $\mathscr{D}_T$ is the time--space domain $(0,T) \times \Omega_\ell$.}
	\label{fig:2dgeometry}
\end{figure}

The relative volume of tumour cells (cell phase) and extra--cellular fluid (fluid phase) are denoted by $\alpha := \alpha(t,{\boldsymbol x})$ and $\beta := \beta(t,{\boldsymbol x})$, respectively. We assume that the tumour does not contain any voids, which implies that $\alpha + \beta = 1$, and hence $\beta$ can be determined using $\alpha$. The velocity by which the cells are moving is denoted by ${\boldsymbol u} := {\boldsymbol u}(t,{\boldsymbol x})$. The average pressure experienced in the fluid phase is denoted by $p := p(t,{\boldsymbol x})$. The cell growth is controlled by a limiting nutrient and $c := c(t,{\boldsymbol x})$ represents its concentration. 

Depending on the conditions in which the tumour is growing, the nutrient supply can be abundant or limited.  For instance, when the growth is \emph{in vitro}, the external atmosphere acts as an unlimited source of nutrients, like oxygen. On the contrary, when the growth is \emph{in vivo}, the tissues and other biological materials around the tumour hinder the smooth diffusion of nutrients from the adjacent capillary tissues. Hence, the nutrient supply is limited in the \emph{in vivo} case.  We consider the two cases of \emph{in vitro} and \emph{in vivo} growth, and present models to describe them.  

\subsection{Common features of both models}

The \emph{in vitro} model comes from \cite{byrne_mcelawin_preziosi_2003}, and the \emph{in vivo} one is a slight modification of this model. Both models are presented in dimensionless form and seek the variables $\left(\alpha,\velc,p,c,\Omega\right)$ such that the mass balance on $\alpha$ and the momentum balance on $(\boldsymbol{u},p)$ hold in the moving domain: for every $t \in (0,T)$ and ${\boldsymbol x} \in \Omega(t)$,
\begin{subequations} \label{sys:model_common}
	\begin{align}
	\label{eqn:v_frac}
	\partial_t \alpha + \text{div}(\alpha\velc) &= \alpha f(\alpha,c), \\
	\label{eqn:velocity}
	-\text{div} \left( \alpha \varepsilon(\velc)\right) + \nabla p &= -\nabla\mathscr{H}(\alpha),  \textrm{ and }\\
	\label{eqn:pressure}
	- \text{div} \left(\dfrac{1-\alpha}{k\alpha} \nabla p \right) + \text{div} (\velc) &= 0.
	\end{align}
	\label{eqn:formulation I}
	The difference between the two models lies in the domain over which the oxygen tension satisfies the following reaction--diffusion equation:
	\begin{equation}
	\label{eqn:o_tension}
	\partial_t c - \,\text{div}(\eta \nabla c) =  -\dfrac{Q c \alpha}{1+\widehat{Q}c}. 
	\end{equation}
	Above, the function $f$ is defined by $f(\alpha,c) := (1 - \alpha)b(c) -  d(c)$, where $b(c) := (1 + s_1)c/(1 + s_1c)$, $d(c) := (s_2 + s_3c)/(1 + s_4c)$, and $s_1,\,s_2,\,s_3$ and $s_4$ are positive constants which control proliferation and death rates of the tumour cells. The operator $\varepsilon$ is defined by $\varepsilon({\boldsymbol u}) := 2\mu\nabla_{s}{\boldsymbol u}+ \lambda \text{div}({\boldsymbol u})\mathbb{I}_{d}$, where $\mathbb{I}_d$ is the $d$--dimensional identity tensor and $\nabla_{s}{\boldsymbol u} =  (\nabla {\boldsymbol u} + (\nabla {\boldsymbol u})^T)/2$. The scalar constants $\mu$ and $\lambda$ are the shear and bulk viscosity coefficients, respectively and are related by $\lambda = -2\mu/3$ and $\mu > 0$. The function $\mathscr{H}(\alpha)$ is defined by $\alpha(\alpha - \astar)^{+}/(1 - \alpha)^2$, where $\astar$ is a positive constant, $s^{+} := \max(0,s)$, and $s^{-} := -\min(0,s)$ in the sequel. The positive constant $k$ controls the traction between the cell and fluid phases. The constant $\eta > 0$ is the diffusivity coefficient of the limiting nutrient inside the tumour, and the constants $Q > 0$, further referred to as the absorptivity coefficient, and $\widehat{Q} \ge 0$ control the nutrient consumption by the cells. 
	
	The initial condition on $\alpha$ and the boundary conditions on $(\boldsymbol{u},p)$ are also common to both models:
	\begin{equation}
	\alpha(0,{\boldsymbol x}) = \alpha_0({\boldsymbol x})\quad\forall{\boldsymbol x} \in \Omega(0),
	\label{eqn:in_cond} 
	\end{equation}
	\begin{equation}
	(- \alpha \varepsilon({\boldsymbol u}) + p\mathbb{I}_d)\cdot {\boldsymbol n}_{\vert {\Gamma(t)}} = -\mathscr{H}(\alpha)\mathbb{I}_d\cdot{\boldsymbol n}_{\vert {\Gamma(t)}},\;\boldsymbol{u}_{\Gamma(t),{\boldsymbol \tau}} = \boldsymbol{0}\,,\;
	p_{\vert{\Gamma(t)}} = 0\quad\forall t \in (0,T).
	\label{eqn:bound_cond_2}
	\end{equation}
	The moving boundary is governed by the ordinary differential equation:
	\begin{equation}
	\partial_t {\boldsymbol \gamma}\cdot{\boldsymbol n}_{\vert{\Gamma(t)}} = 	{\boldsymbol u}_{\vert{\Gamma(t)}}\cdot{\boldsymbol n}_{\vert{\Gamma(t)}}\quad\forall t\in (0,T),
	\label{bdrequation} 
	\end{equation}	
\end{subequations}
where ${\boldsymbol \gamma}$ is a local parametrisation of $B_T$. We assume that $0 < m_{01} \leq  \alpha_0({\boldsymbol x}) \leq m_{02} < 1$ and $0 \leq c_0({\boldsymbol x}) \leq 1$ for every ${\boldsymbol x} \in \Omega(0)$, where $m_{01}$ and $m_{02}$ are positive constants.
\begin{remark}
	Note that in~\eqref{bdrequation} we only specify the normal velocity of the moving boundary. The tangential velocity is not provided here. This is because tangential velocity does not change the topological structure of $B_T$, but changes only the parametrisation of $B_T$. Therefore, the domain $D_T$, that is the time--space region enclosed by $B_T$, is independent of the tangential velocity of the moving boundary. The extended solution presented in Definition \ref{defn:nsm_ext} below recovers the domain $D_T$ without resorting to an explicit parametrisation of the boundary $B_T$, and is an added advantage of the notion of the extended solution. 
\end{remark}

The initial and boundary conditions for $c$ depend on each model and are made precise in the next sections. Table \ref{tab:NLM.NUM} summarises the two models.

\begin{table}
	\begin{center}
		\begin{tabular}{|c|c|c|}
			\hline
			\emph{Equation} & \emph{NUM} & \emph{NLM}\\
			\hline
			Evolution of $\alpha, \boldsymbol{u}, p$, Eqs. \eqref{eqn:v_frac}--\eqref{eqn:pressure} & \multicolumn{2}{c|}{$\boldsymbol{x}\in \Omega(t)$}\\
			\hline
			Boundary conditions $\alpha, \boldsymbol{u}, p$ & \multicolumn{2}{c|}{\eqref{eqn:bound_cond_2}} \\
			\hline
			Initial conditions on $\alpha$ & \multicolumn{2}{c|}{\eqref{eqn:in_cond}}  \\
			\hline
			Evolution of $c$, Eq. \eqref{eqn:o_tension}  & For $\boldsymbol{x}\in \Omega(t)$ & For $\boldsymbol{x}\in\Omega_\ell$\\
			\hline
			Initial conditions on $c$ & \eqref{eqn:in_cond_NUM_c} (on $\Omega(0)$) & \eqref{eqn:in_cond_NLM_c} (on $\Omega_\ell$)  \\
			\hline
			Boundary conditions $c$ & \eqref{eqn:bound_cond_NUM_c} (on $\Gamma(t)$) & \eqref{eqn:bound_cond_NLM_c} (on $\partial\Omega_\ell$)\\
			\hline
		\end{tabular}
		\caption{Summary of NUM and NLM models.}
		\label{tab:NLM.NUM} 
	\end{center}
\end{table}

\subsection{Nutrient unlimited model (NUM)}
In the \emph{nutrient unlimited model} (NUM), we assume that the tumour grows in free space. Since the tumour has no voids within and is close--packed, it is reasonable to assume that the nutrient diffusion rate in the tumour is much lower than that of the free space outside the tumour. The nutrient consumed by the boundary cells is immediately replenished by the fast diffusing external nutrient supply. As a consequence, the oxygen tension equation \eqref{eqn:o_tension} is only solved on the moving domain, for $t\in (0,T)$ and $\boldsymbol{x}\in\Omega(t)$, and at the boundary of this moving domain the nutrient concentration is set as the maximum value, which is unity after non--dimensionalisation. This leads to the following boundary and initial conditions for $c$:
\begin{align}
c_{\vert\Gamma(t)} ={}& 1\quad\forall t \in (0,T),
\label{eqn:bound_cond_NUM_c}\\
c(0,{\boldsymbol x}) ={}& c_0({\boldsymbol x})\quad\forall{\boldsymbol x} \in \Omega(0),
\label{eqn:in_cond_NUM_c} 
\end{align}

\subsection{Nutrient limited model (NLM)}

In the \emph{nutrient limited model} (NLM), we assume that the tumour is growing inside a medium or a tissue. In this case, the nutrient diffusion rates in the exterior and interior regions of the tumour are in the same numerical range. Therefore, considerable delay can be expected for the nutrient to diffuse through the medium and reach the tumour. Consequently, the nutrient concentration at the tumour boundary is not unity at every time and one has to model the diffusion of the nutrient in the medium and in the tumour. Taking $\Omega_\ell$ as the spatial region that encloses the tumour and the medium, the oxygen tension equation \eqref{eqn:o_tension} is therefore solved for $t\in (0,T)$ and $\boldsymbol{x}\in\Omega_\ell$
($\eta$ could change between the external medium and the tumour), and the boundary and initial conditions on $c$ are
\begin{align}
c(t,\boldsymbol{x}) ={}& c_b({\boldsymbol x})\quad\forall t \in (0,T)\,,\forall \boldsymbol{x}\in \partial\Omega_\ell,
\label{eqn:bound_cond_NLM_c}\\
c(0,{\boldsymbol x}) ={}& 0\quad\forall{\boldsymbol x} \in \Omega_\ell,
\label{eqn:in_cond_NLM_c} 
\end{align}
This initial condition means that no nutrient is available for the tumour cells initially.
The boundary data satisfy $0 \leq c_{b} \leq 1$, and depends on the modelling situation under consideration. For illustrative purposes in two dimensions, we assume that blood vessels are present at $y = -\ell$ or $x = -\ell$ only. Therefore, the  nutrient concentration at the boundary, $c_b$, is unity at $y = -\ell$ or $x = -\ell$ and zero at the other points in $\partial \Omega_\ell$.

\section{Preliminaries and notations}
\label{sec:prelims}
We describe a smooth hypersurface, $\mathcal{S} \subset \mathbb{R}^d$ and a local parametrisation of $\mathcal{S}$. For a detailed discussion on these topics, the reader may refer to~\cite[Chapter 2]{KTapp2016}.  The notion of the local parametrisation of a smooth surface is crucial in extending the NUM and NLM models defined in $D_T$ to $\mathscr{D}_T$, and thereby in eliminating the need for the evolving boundary, $B_T$.
\begin{definition}[{$\mathscr{C}^1-$}smooth hypersurface]
	\label{defn:local_rep}
	A set $\mathcal{S} \subset \mathbb{R}^d$ is said to be a $\mathscr{C}^1-$smooth hypersurface in $\mathbb{R}^d$ if the following conditions hold:
	\begin{enumerate}[label= $\mathrm{(SH.\arabic*)}$,ref=$\mathrm{(SH.\arabic*)}$,leftmargin=\widthof{(SH.1)}+3\labelsep]
		\item \label{it.sh1} For each ${\boldsymbol z} \in \mathcal{S}$, there exists an open set $\mathcal{O}_{\boldsymbol z} \subset \mathbb{R}^d$ containing ${\boldsymbol z}$ and a function $f_{\boldsymbol z} : \mathcal{O}_{\boldsymbol z} \rightarrow \mathbb{R}$ such that $\mathcal{S}\cap\mathcal{O}_{\boldsymbol z} = \{ {\boldsymbol x} \in \mathcal{O}_{\boldsymbol z} : f_{\boldsymbol z}(\boldsymbol x) = 0 \}$.
		\item Each $f_{\boldsymbol z}$ in~\ref{it.sh1} belongs to $\mathscr{C}^1(\mathcal{O}_{\boldsymbol z})$ and  $\nabla f_{\boldsymbol z} \neq 0$ on $\mathcal{O}_{\boldsymbol z}$.
	\end{enumerate}
	The collection $\{\mathcal{O}_{\boldsymbol z}, f_{\boldsymbol z}\}_{{\boldsymbol z} \in \mathcal{S}}$ is called a $\mathscr{C}^1-$smooth local representation of $\mathcal{S}$.
\end{definition}

\begin{definition}[Regular surface and local parametrisation] \label{defn:local_param}
	A set $\mathcal{S} \subset \mathbb{R}^d$ is said to be a regular surface if for each ${\boldsymbol z} \in \mathcal{S}$, there exists open sets $U_{\boldsymbol z} \subset \mathbb{R}^{d-1}$ and $V_{\boldsymbol z} \subset \mathbb{R}^d$ with ${\boldsymbol z} \in V_{\boldsymbol z}$, and a diffeomorphism ${\boldsymbol \sigma}_{\boldsymbol z} : U_{\boldsymbol z} \rightarrow V_{\boldsymbol z}\cap \mathcal{S}$. Each ${\boldsymbol \sigma}_{\boldsymbol z}$ is called a coordinate chart, and the collection $\{U_{\boldsymbol z}, V_{\boldsymbol z}, {\boldsymbol \sigma}_{\boldsymbol z}\}_{{\boldsymbol z} \in \mathcal{S}}$ is called a local parametrisation for $\mathcal{S}$.
\end{definition} 

\noindent  If $\{\mathcal{O}_{\boldsymbol z}, f_{\boldsymbol z}\}_{{\boldsymbol z} \in \mathcal{S}}$ is a $\mathscr{C}^1-$smooth local representation of the $\mathscr{C}^1 -$smooth hypersurface $\mathcal{S}$, then the normal to $\mathcal{S}$ at a point ${\boldsymbol z} \in \mathcal{S}$ is given by $\nabla f_{\boldsymbol z}({\boldsymbol z})/\vert\vert\nabla f_{\boldsymbol z}({\boldsymbol z})\vert\vert_{2}$, and this is meaningful since $\nabla f_{\boldsymbol z}({\boldsymbol z}) \neq 0$ by Definition~\ref{defn:local_rep}. An application of Theorem 3.27 in~\cite{KTapp2016} shows that every $\mathscr{C}^1-$smooth hypersurface is regular and therefore, has a local parametrisation.

\subsection{Function spaces and norms}
In this subsection, we give the definitions of function spaces and norms used in the remaining of this article.

For a domain $A \subset \mathbb{R}^d$, $L^{p}(A)$ ($1 \leq p \leq \infty$) and $H^1(A)$ are standard Sobolev spaces of functions $f : A \rightarrow \mathbb{R}$. The notation $(\cdot,\cdot)_{A}$ stands for the standard $L^2(A)$ inner product. The space $\hna = (H^1(A))^d$ is the collection of functions ${\boldsymbol u} = \left({u}_1,\ldots,{u}_d\right)$ such that ${u}_i : A \rightarrow \mathbb{R}$  and ${u}_i \in H^1(A)$ for $i=1,\dots,d$. 

We define the norms $||{\boldsymbol u}||_{0,A} := ({\boldsymbol u},{\boldsymbol u})_{A}^{1/2}$ and $||{\boldsymbol u}||_{k,A} := \sum_{i=1}^d\sum_{\boldsymbol{j}, |\boldsymbol{j}| \leq k} ||\partial^{\boldsymbol{j}} u_i||_{0,A}$, where $\boldsymbol{j}$ is a multi-index. Define the subspace of functions in $\boldsymbol{\mathrm H}^1_{d}(A)$ with homogeneous tangential component at $\partial A$, and the subspace of functions in $H^1(A)$ with homogeneous Dirichlet boundary condition $\partial A$, respectively by
\begin{align*}
\utanspacea &:= \{ {\boldsymbol u} \in \boldsymbol{\mathrm H}^1_{d}(A) : \boldsymbol{u}_{\partial A,{\boldsymbol \tau}} = {\boldsymbol 0}\} \textrm{ and } \\
H^1_0(A) &:= \{ f \in H^1(A) : f_{\vert \partial A} = 0\}.
\end{align*}
The space $BV(A)$ denotes the the space of all functions with bounded variation (see Definition~\ref{def:bv}) on the set $A$.

Let $A_T = \cup_{0 < t < T} \{t\} \times X(t)$, where $\{X(t)\}_{t \in (0,T)}$ is a family of domains such that $X(t) \subset \mathbb{R}^d$ for every $t \in (0,T)$. Define the Hilbert spaces
\begin{align}
H_{\nabla}^{1,u}(A_T) := &\{ \velc \in (L^2(A_T))^d : \partial_{x_j}u_i \in L^2(A_T),\,i,j=1,\ldots,d \\
&{}{}{}{}\text{ and } \boldsymbol{u}_{\partial X(t),\boldsymbol{\tau}} = \boldsymbol{0}\;\forall t \in (0,T)\} \textrm{ and }\\
H_{\nabla}^{1,c}(A_T) := &\{c \in L^2(A_T) : \nabla c \in (L^2(A_T))^d \text{ and } c_{\vert_{\partial X(t)}} = 0\;\forall t \in (0,T) \}.
\end{align}


\section{Weak solutions and equivalence theorem}
\label{sec:weak_equiv}

In this section, we first establish in Section \ref{sec:wp.up} the well--posedness of the weak form of the velocity--pressure momentum balance, and present two weak formulations of the NUM model \eqref{sys:model_common}--\eqref{eqn:in_cond_NUM_c}. In the first one, the scalar conservation law \eqref{eqn:v_frac} is set on the moving domain $\Omega(t)$, while in the second one the velocity and oxygen tension are extended to the entire box $\Omega_\ell$ and the cell volume fraction $\alpha$ is set to satisfy the conservation law \eqref{eqn:v_frac} on this box. The interest of this second model, as already illustrated in the one dimensional case in \cite{DNR19,Remesan2019}, is to enable the usage of a discrete scheme using a fixed background mesh, rather than a mesh that moves with the domain $\Omega(t)$. 

The two weak formulations are shown in Section \ref{sec:wf.equiv} to be equivalent. The key relation for establishing this equivalence is Proposition \ref{prop:normal}, which establishes a formula for the outer normal to the time--space tumour domain in terms of the cell volume fraction, as well as the fact that if a piecewise smooth vector field $\mathbf{F}$ has an $L^2$ divergence, then it has a zero normal jump across any hypersurface. 

We only consider here the NUM model, the extension to NLM being straightforward.

\subsection{Well-posedness of velocity-pressure system}\label{sec:wp.up}
We present the weak formulations of~\eqref{eqn:velocity} and~\eqref{eqn:pressure} with boundary conditions \eqref{eqn:bound_cond_2}, which remain the same for Definition~\ref{defn:nsm_weak} and Definition~\ref{defn:nsm_ext}. Let ${\boldsymbol u} \in \uspace$ and $p\in \cspace$. The weak formulations are as follows. For all ${\boldsymbol v} \in \uspace$ and $z\in \cspace$, and for each $t \in (0,T)$ it holds
\begin{subequations}
	\begin{align}
	\label{eqn:pvsys_1}
	a_{1}^t(\velc(t,\cdot),{\boldsymbol v}(t,\cdot)) - a_3^t(p(t,\cdot),{\boldsymbol v}(t,\cdot)) &=   \mathcal{L}_{\alpha}^t({\boldsymbol v}(t,\cdot)) \textrm{ and } \\
	\label{eqn:pvsys_2}
	a_{2}^t(p(t,\cdot),z(t,\cdot)) + a_3^t(z(t,\cdot),\velc(t,\cdot)) &= 0,
	\end{align}
\end{subequations}
where $a_{1}^t : \utanspace \times \utanspace \rightarrow \mathbb{R}$, $a_2^t :  H^1_0(\Omega(t)) \times H^1_0(\Omega(t)) \rightarrow \mathbb{R},$ and $a_{3}^t : H^1_0(\Omega(t)) \times \utanspace \rightarrow \mathbb{R}$ are bilinear forms given by: for ${\boldsymbol \psi}_j \in \utanspace$ and $q_j \in H^1_0(\Omega(t))$, where $j \in \{1,2\}$,
\begin{align}
a_{1}^t({\boldsymbol \psi}_1,{\boldsymbol \psi}_2) &= \int_
{\Omega(t)} \alpha(t,\cdot)\left(2\mu \nabla_s{\boldsymbol \psi}_1:\nabla_s{\boldsymbol \psi}_2 + \lambda \text{div}({\boldsymbol \psi}_1)\text{div}({\boldsymbol \psi}_2)\right)\vx, \\
a_{2}^t(q_1,q_2) &= \int_{\Omega(t)} \dfrac{1-\alpha(t,\cdot)}{k\alpha(t,\cdot)} \nabla q_1 \cdot \nabla q_2\,\mathrm{d}{\boldsymbol x}, \textrm{ and } \\
a^t_3(q_1,{\boldsymbol \psi}_1) &= \int_{\Omega(t)}  q_1\,\text{div}({\boldsymbol \psi}_1)\,\vx,
\end{align}
and $\mathcal{L}_{\alpha}^t : \hn \rightarrow \mathbb{R}$ is a linear form given by
\begin{equation} \label{eqn:lform}
\mathcal{L}_{\alpha}^t({\boldsymbol \psi}_1) = \int_{\Omega(t)} \mathscr{H}(\alpha(t,\cdot))\text{div}({\boldsymbol \psi}_1)\,\vx.
\end{equation}
Under the assumption that $\alpha : D_T \rightarrow \mathbb{R}$ is known and satisfies $0 < m_{11} \le \alpha \le m_{12} < 1$, where $m_{11}$ and $m_{12}$ are positive constants, we show that for each $t \in (0,T)$, ~\eqref{eqn:pvsys_1} and~\eqref{eqn:pvsys_2} are well-posed. In Theorem~\ref{thm:well_posedness_up}, we suppress the time dependency for the ease of notation; hence, ${\boldsymbol u}$ in Theorem~\ref{thm:well_posedness_up} stands for ${\boldsymbol u}(t,\cdot)$, and so do ${\boldsymbol v}, p,$ and $z$.

\begin{lemma}
	\label{lemma:coer_eps}
	If $\boldsymbol{v} \in \utanspace$, then there exists a constant $\mathscr{C}_{KP} > 0$ such that $\mathscr{C}_{KP} ||{\boldsymbol v}||_{1,\Omega(t)} \le ||\nabla_s({\boldsymbol v})||_{0, \Omega(t)}$.
\end{lemma}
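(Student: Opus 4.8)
The plan is to combine Korn's second inequality on $\Omega(t)$ with a compactness argument, using the tangential boundary condition encoded in $\utanspace$ to eliminate the kernel of $\nabla_s$.

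First I would invoke Korn's second inequality: since $\Omega(t)$ is a bounded domain with $\mathscr{C}^1$ boundary, there is a constant $C_K = C_K(\Omega(t)) > 0$ with $\|\boldsymbol{v}\|_{1,\Omega(t)} \le C_K\left(\|\nabla_s\boldsymbol{v}\|_{0,\Omega(t)} + \|\boldsymbol{v}\|_{0,\Omega(t)}\right)$ for every $\boldsymbol{v} \in \boldsymbol{\mathrm H}^1_d(\Omega(t))$. In view of this, the lemma is equivalent to the Poincar\'e-type estimate $\|\boldsymbol{v}\|_{0,\Omega(t)} \le C\,\|\nabla_s\boldsymbol{v}\|_{0,\Omega(t)}$ on $\utanspace$, which I would prove by contradiction: if no such $C$ exists, there is a sequence $\boldsymbol{v}_n \in \utanspace$ with $\|\boldsymbol{v}_n\|_{1,\Omega(t)} = 1$ and $\|\nabla_s\boldsymbol{v}_n\|_{0,\Omega(t)} \to 0$. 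By the Rellich--Kondrachov theorem a subsequence converges in $L^2(\Omega(t))^d$; applying Korn's inequality to the differences $\boldsymbol{v}_n - \boldsymbol{v}_m$ shows this subsequence is Cauchy in $\boldsymbol{\mathrm H}^1_d(\Omega(t))$, hence converges to some $\boldsymbol{v}$ with $\|\boldsymbol{v}\|_{1,\Omega(t)} = 1$, $\nabla_s\boldsymbol{v} = 0$, and $\boldsymbol{v} \in \utanspace$ (the latter because the tangential trace is continuous on $\boldsymbol{\mathrm H}^1_d(\Omega(t))$, so $\utanspace$ is closed).

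It remains to show that a field $\boldsymbol{v}$ with $\nabla_s\boldsymbol{v} = 0$ and vanishing tangential component on $\partial\Omega(t)$ must be identically zero, which contradicts $\|\boldsymbol{v}\|_{1,\Omega(t)} = 1$. Since $\Omega(t)$ is connected, $\nabla_s\boldsymbol{v} = 0$ forces $\boldsymbol{v}(\boldsymbol{x}) = B\boldsymbol{x} + \boldsymbol{b}$ with $B$ skew-symmetric, and the tangential condition means $\boldsymbol{v}$ is everywhere parallel to the outward normal $\normal_{\vert\Gamma(t)}$. If $B = 0$, then $\boldsymbol{v} \equiv \boldsymbol{b}$ is constant; a nonzero constant field cannot be normal to the $\mathscr{C}^1$ boundary of a bounded domain (the normal would then be constant, forcing $\partial\Omega(t)$ into a hyperplane), so $\boldsymbol{b} = 0$. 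If $B \ne 0$, then in two dimensions $B$ is invertible, $\boldsymbol{v}$ is a rotation about $\boldsymbol{p} := -B^{-1}\boldsymbol{b}$, and skew-symmetry gives $\boldsymbol{v}(\boldsymbol{x}) \cdot (\boldsymbol{x} - \boldsymbol{p}) = B(\boldsymbol{x}-\boldsymbol{p})\cdot(\boldsymbol{x}-\boldsymbol{p}) = 0$; picking $\boldsymbol{x}^{\ast} \in \partial\Omega(t)$ that maximises $|\boldsymbol{x} - \boldsymbol{p}|$ over $\overline{\Omega(t)}$, the outward normal at $\boldsymbol{x}^{\ast}$ is parallel to $\boldsymbol{x}^{\ast} - \boldsymbol{p}$, so $\boldsymbol{v}(\boldsymbol{x}^{\ast})$ is both parallel and orthogonal to $\boldsymbol{x}^{\ast} - \boldsymbol{p}$, hence $B(\boldsymbol{x}^{\ast} - \boldsymbol{p}) = 0$ and $\boldsymbol{x}^{\ast} = \boldsymbol{p}$ --- impossible because $\overline{\Omega(t)}$ has positive diameter. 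The same extremal-point argument, applied to the distance from the rotation axis and to displacement along a translation direction, disposes of the screw-motion case in three dimensions. The constant $\mathscr{C}_{KP}$ is then the one produced by this argument (equivalently, $\mathscr{C}_{KP} = (C_K\sqrt{1+C^2})^{-1}$ with $C$ the Poincar\'e constant above).

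The main obstacle is this last geometric rigidity step --- ruling out nonzero rigid displacements that are everywhere normal to $\partial\Omega(t)$ for a general (not necessarily symmetric) bounded $\mathscr{C}^1$ domain; the two-dimensional case is clean, while the three-dimensional screw case requires slightly more care. The supporting ingredients (Korn's second inequality, Rellich--Kondrachov compactness, closedness of $\utanspace$) are standard.
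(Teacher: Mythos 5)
Your proof is correct, and its functional--analytic skeleton coincides with the paper's: Korn's second inequality, Rellich--Kondrachov compactness, and the injectivity of $\nabla_s$ on $\utanspace$ are exactly the three ingredients the paper feeds into the Petree--Tartar lemma, and your contradiction/subsequence argument is nothing but that lemma's proof written out by hand, so no difference there. Where you genuinely diverge is the injectivity step: the paper disposes of it by citing Theorem~13 of Bauer et al., whereas you prove it from scratch by characterising $\ker\nabla_s$ as rigid displacements $\boldsymbol{v}(\boldsymbol{x})=B\boldsymbol{x}+\boldsymbol{b}$ and showing, via extremal points, that a nonzero rigid displacement cannot be everywhere normal to the $\mathscr{C}^1$ boundary of a bounded domain. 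Your 2D argument is complete: at a boundary maximiser $\boldsymbol{x}^\ast$ of $|\boldsymbol{x}-\boldsymbol{p}|^2$ the outward normal is radial, while skew-symmetry makes $\boldsymbol{v}(\boldsymbol{x}^\ast)$ orthogonal to the radius, forcing $\boldsymbol{v}(\boldsymbol{x}^\ast)=0$ and hence $\boldsymbol{x}^\ast=\boldsymbol{p}$, a contradiction. The 3D screw case, which you only sketch, does close along the lines you indicate: at a boundary maximiser of the squared distance to the rotation axis the normal is the radial direction perpendicular to the axis, and both $\boldsymbol{\omega}\times(\boldsymbol{x}-\boldsymbol{p})$ and the axial translation $\boldsymbol{b}_\parallel$ are orthogonal to that direction, so $\boldsymbol{v}$ vanishes there, which simultaneously kills $\boldsymbol{b}_\parallel$ and places the maximiser on the axis, collapsing $\Omega(t)$ onto a line --- impossible for an open set. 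What your route buys is a self-contained proof independent of the cited reference; what the paper's buys is brevity. One cosmetic point: with the additive form of Korn's inequality you quoted, the final constant is $(C_K(1+C))^{-1}$ rather than $(C_K\sqrt{1+C^2})^{-1}$, which of course does not affect the statement.
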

\begin{proof}
	Consider the spaces $X = \utanspace$, $Y = [L^2(\Omega(t))]^{d \times d}$, and $Z = [L^2(\Omega(t))]^{d}$, and the linear map $A := \nabla_s : X \rightarrow Y$ and the natural embedding $T := id: X \rightarrow Z$.  Theorem~13 in \cite{Bauer2016173} shows that $A$ is an injection. The natural embedding $T$ is compact by Rellich-Kondrachov Theorem. Korn's second inequality (Theorem~\ref{korns}) yields 
	$\mathscr{C}_K ||\boldsymbol{v}||_{1,\Omega} = \mathscr{C}_K ||\boldsymbol{v}||_{X} \le ||\nabla_s (\boldsymbol{v})||_{0,\Omega} + ||\boldsymbol{v}||_{0,\Omega} = ||A \boldsymbol{v}||_Y + ||T\boldsymbol{v}||_Z$. An application of Petree--Tartar lemma (Theorem~\ref{tartar}) yields the desired conclusion. 
\end{proof}

\begin{theorem}[Well-posedness]
	\label{thm:well_posedness_up}
	Define the product space $\upspace{t} := \utanspace \times H^1_0(\Omega(t))$ and the bilinear operator $\mathfrak{A}^t : \upspace{t} \times \upspace{t} \rightarrow \mathbb{R}$ by
	\begin{equation}
	\mathfrak{A}^t\left(({\boldsymbol u},p),\,({\boldsymbol v},z) \right) = a_{1}^t({\boldsymbol u},{\boldsymbol v}) - a_{3}^t(p,{\boldsymbol v}) + a_{2}^t(p,z) + a_{3}^t(z,{\boldsymbol u}).
	\end{equation}
	If $0 < m_{11} \le \alpha \le m_{12} < 1$, then $\mathfrak{A}^t$ is a continuous and coercive bilinear form in $\upspace{t}$, and the linear form $\mathfrak{L}^t : \upspace{t} \rightarrow \mathbb{R}$ defined by $\mathfrak{L}^t({\boldsymbol v},z) = \mathcal{L}_{\alpha}^t({\boldsymbol v})$ is continuous on $\upspace{t}$. Hence,  there exists a unique $({\boldsymbol u},p) \in \upspace{t}$ such that for all $({\boldsymbol v},z) \in \upspace{t}$,
	\begin{equation}
	\mathfrak{A}^t\left(({\boldsymbol u},p),\,({\boldsymbol v},z) \right) = \mathfrak{L}^t(({\boldsymbol v},z)).
	\label{eqn:existence_up}
	\end{equation}
\end{theorem}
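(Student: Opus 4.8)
The plan is to verify the hypotheses of the Lax--Milgram theorem for $\mathfrak{A}^t$ and $\mathfrak{L}^t$ on the Hilbert space $\upspace{t}$, equipped with the natural product norm $\|({\boldsymbol v},z)\|_{\upspace{t}}^2 := \|{\boldsymbol v}\|_{1,\Omega(t)}^2 + \|z\|_{1,\Omega(t)}^2$. First I would record continuity: each of $a_1^t, a_2^t, a_3^t$ is bounded by repeated use of Cauchy--Schwarz together with the two--sided bound $0 < m_{11}\le \alpha \le m_{12} < 1$, which controls the coefficients $\alpha$, $2\mu$, $|\lambda|$ in $a_1^t$ and $(1-\alpha)/(k\alpha)$ in $a_2^t$ from above; the fact that $\nabla_s$, $\operatorname{div}$ and $\nabla$ are bounded from $H^1$ into $L^2$ then gives $|\mathfrak{A}^t((\velc,p),({\boldsymbol v},z))| \le M \|(\velc,p)\|_{\upspace{t}}\|({\boldsymbol v},z)\|_{\upspace{t}}$. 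For $\mathfrak{L}^t$, since $0 < m_{11}\le \alpha \le m_{12} < 1$ and $\mathscr{H}$ is continuous on $[m_{11},m_{12}]$ (indeed $\mathscr{H}(\alpha) = \alpha(\alpha-\astar)^+/(1-\alpha)^2$ is bounded on that interval), $\mathscr{H}(\alpha(t,\cdot)) \in L^\infty(\Omega(t)) \subset L^2(\Omega(t))$, so $|\mathcal{L}_\alpha^t({\boldsymbol v})| \le \|\mathscr{H}(\alpha)\|_{0,\Omega(t)}\|\operatorname{div}{\boldsymbol v}\|_{0,\Omega(t)} \le C\|{\boldsymbol v}\|_{1,\Omega(t)}$, and this bound is independent of $z$, giving continuity of $\mathfrak{L}^t$ on $\upspace{t}$.

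The crux is coercivity. Evaluating $\mathfrak{A}^t$ on the diagonal, the two copies of $a_3^t$ cancel: $\mathfrak{A}^t(({\boldsymbol v},z),({\boldsymbol v},z)) = a_1^t({\boldsymbol v},{\boldsymbol v}) + a_2^t(z,z)$. For $a_2^t$ one has $a_2^t(z,z) = \int_{\Omega(t)} \frac{1-\alpha}{k\alpha}|\nabla z|^2 \ge \frac{1-m_{12}}{k\,m_{12}}\|\nabla z\|_{0,\Omega(t)}^2$, and since $z \in H^1_0(\Omega(t))$ the Poincar\'e inequality upgrades $\|\nabla z\|_{0,\Omega(t)}^2$ to control $\|z\|_{1,\Omega(t)}^2$. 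For $a_1^t$, the key point is that with $\lambda = -2\mu/3$ the integrand $2\mu|\nabla_s{\boldsymbol v}|^2 + \lambda(\operatorname{div}{\boldsymbol v})^2$ is still pointwise nonnegative — indeed $2\mu|\nabla_s{\boldsymbol v}|^2 + \lambda(\operatorname{div}{\boldsymbol v})^2 \ge 2\mu(|\nabla_s{\boldsymbol v}|^2 - \tfrac13(\operatorname{div}{\boldsymbol v})^2) \ge \tfrac{4\mu}{3}|\nabla_s{\boldsymbol v}|^2$ after using $(\operatorname{div}{\boldsymbol v})^2 = (\operatorname{tr}\nabla_s{\boldsymbol v})^2 \le d\,|\nabla_s{\boldsymbol v}|^2$ with $d\le 3$ — wait, I should be slightly careful: the clean bound is $(\operatorname{div}{\boldsymbol v})^2\le d|\nabla_s{\boldsymbol v}|^2$, so with $d=3$, $2\mu|\nabla_s{\boldsymbol v}|^2 - \tfrac{2\mu}{3}(\operatorname{div}{\boldsymbol v})^2 \ge 2\mu|\nabla_s{\boldsymbol v}|^2 - 2\mu|\nabla_s{\boldsymbol v}|^2 = 0$, so for $d=3$ I instead simply keep $2\mu|\nabla_s{\boldsymbol v}|^2 + \lambda(\operatorname{div}{\boldsymbol v})^2 \ge 0$ and argue via the Cauchy--Schwarz/arithmetic route that the quadratic form in the eigenvalues of $\nabla_s{\boldsymbol v}$ is $\ge 0$ with equality only when $\nabla_s{\boldsymbol v} = 0$; for the strict lower bound I use $a_1^t({\boldsymbol v},{\boldsymbol v}) \ge m_{11}\big(2\mu\|\nabla_s{\boldsymbol v}\|_{0,\Omega(t)}^2 + \lambda\|\operatorname{div}{\boldsymbol v}\|_{0,\Omega(t)}^2\big)$ and then show the bracket is bounded below by $c_\mu\|\nabla_s{\boldsymbol v}\|_{0,\Omega(t)}^2$ for some $c_\mu>0$. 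Finally, Lemma~\ref{lemma:coer_eps} converts $\|\nabla_s{\boldsymbol v}\|_{0,\Omega(t)}^2 \ge \mathscr{C}_{KP}^2\|{\boldsymbol v}\|_{1,\Omega(t)}^2$ — this is precisely where the tangential boundary condition ${\boldsymbol v} \in \utanspace$ (ruling out the rigid-body null space discussed in the introduction) is used. Combining, $\mathfrak{A}^t(({\boldsymbol v},z),({\boldsymbol v},z)) \ge \min(m_{11}c_\mu\mathscr{C}_{KP}^2,\, \tfrac{1-m_{12}}{km_{12}}C_P^{-1})\,\|({\boldsymbol v},z)\|_{\upspace{t}}^2$, which is the desired coercivity.

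With continuity of $\mathfrak{A}^t$, coercivity of $\mathfrak{A}^t$, and continuity of $\mathfrak{L}^t$ all in hand, the Lax--Milgram theorem gives existence and uniqueness of $({\boldsymbol u},p)\in\upspace{t}$ satisfying \eqref{eqn:existence_up}, and the equivalence of \eqref{eqn:existence_up} with the pair \eqref{eqn:pvsys_1}--\eqref{eqn:pvsys_2} is immediate by testing against $({\boldsymbol v},0)$ and $(\boldsymbol 0,z)$ separately and using bilinearity. I expect the main obstacle to be the coercivity of $a_1^t$: one must ensure the quadratic form $2\mu|\nabla_s{\boldsymbol v}|^2 + \lambda(\operatorname{div}{\boldsymbol v})^2$ with the physical relation $\lambda=-2\mu/3$ is genuinely bounded below by a positive multiple of $|\nabla_s{\boldsymbol v}|^2$ (this is a pointwise linear-algebra fact about symmetric matrices, cleanest via diagonalisation of $\nabla_s{\boldsymbol v}$), and then to correctly invoke Lemma~\ref{lemma:coer_eps} so that the lower bound is in the full $H^1$ norm rather than merely the seminorm $\|\nabla_s\cdot\|_{0,\Omega(t)}$. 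Everything else — continuity estimates, the Poincar\'e step for $p$, the cancellation of $a_3^t$ — is routine.
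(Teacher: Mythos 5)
Your overall route is identical to the paper's: continuity via Cauchy--Schwarz and the two-sided bounds on $\alpha$, cancellation of the two $a_3^t$ terms on the diagonal, coercivity of the $a_1^t$ block through Lemma~\ref{lemma:coer_eps} (Korn plus Petree--Tartar), a Poincar\'e step for the $a_2^t$ block, and Lax--Milgram. You are in fact more attentive than the paper on the one delicate point, namely the sign of $\lambda$: the paper's proof simply writes $a_1^t({\boldsymbol u},{\boldsymbol u}) \ge 2m_{11}\mu\int_{\Omega(t)}\nabla_s{\boldsymbol u}:\nabla_s{\boldsymbol u}\,\vx$, silently discarding the nonpositive term $\lambda(\mathrm{div}\,{\boldsymbol u})^2$ as though it were nonnegative, whereas you correctly identify that one must verify a pointwise bound $2\mu|E|^2+\lambda(\mathrm{tr}\,E)^2\ge c_\mu|E|^2$ for symmetric matrices $E$.

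Your resolution of that point, however, is flawed: there is an arithmetic slip in $d=2$ and a genuinely false claim in $d=3$. In two dimensions, $(\mathrm{tr}\,E)^2\le 2|E|^2$ gives $2\mu|E|^2-\tfrac{2\mu}{3}(\mathrm{tr}\,E)^2\ge \tfrac{2\mu}{3}|E|^2$ -- the constant is $2\mu/3$, not $4\mu/3$ -- and with this correction your argument closes. In three dimensions the form is only positive \emph{semi}definite: diagonalising, $2\mu\sum_i e_i^2-\tfrac{2\mu}{3}\bigl(\sum_i e_i\bigr)^2$ vanishes whenever $e_1=e_2=e_3\neq 0$, i.e.\ whenever $E$ is a nonzero multiple of the identity. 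Hence your assertion that equality forces $\nabla_s{\boldsymbol v}=0$ is incorrect, and no pointwise constant $c_\mu>0$ with the stated lower bound exists; the coercivity of $a_1^t$ cannot be obtained this way for $d=3$. (The paper's own proof suffers from the same gap in $d=3$; since all the numerics are carried out in two dimensions, the result as actually used is unaffected, but a correct 3D proof would require a different argument, e.g.\ exploiting the $a_2^t$--$a_3^t$ coupling or restricting the admissible $\lambda$.) The remaining steps -- continuity of $\mathfrak{A}^t$ and $\mathfrak{L}^t$, the Poincar\'e inequality for $z\in H^1_0(\Omega(t))$, the use of Lemma~\ref{lemma:coer_eps}, and the passage from \eqref{eqn:existence_up} back to \eqref{eqn:pvsys_1}--\eqref{eqn:pvsys_2} -- are all correct and match the paper.
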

\begin{proof}
	Continuity of the bilinear form follows from the estimates below. Since $||\textrm{div}({\boldsymbol u})||_{0,\Omega(t)} \leq \sqrt{d} ||{\boldsymbol u}||_{1,\Omega(t)}$,
	\begin{align}
	\mathfrak{A}^t\left(({\boldsymbol u},p),\,({\boldsymbol v},z) \right) &\le 2m_{12}(\mu + \lambda) ||{\boldsymbol u}||_{1,\Omega(t)}||{\boldsymbol v}||_{1,\Omega(t)} + ||p||_{1,\Omega(t)}\sqrt{d}||\boldsymbol{v}||_{1,\Omega(t)} \\
	&{}{}+ \dfrac{1 - m_{11}}{km_{11}} ||p||_{1,\Omega(t)}||z||_{1,\Omega(t)} + \sqrt{d}||z||_{1,\Omega(t)}||{\boldsymbol u}||_{1,\Omega(t)} \\
	&\le\mathscr{C} (||{\boldsymbol u}||_{1,\Omega(t)}^2 + ||p||_{1,\Omega(t)}^{2})^{1/2}(||{\boldsymbol v}||_{1,\Omega(t)}^2 + ||z||_{1,\Omega(t)}^{2})^{1/2},
	\end{align}
	where $\mathscr{C}$ is a constant. Set ${\boldsymbol v} = {\boldsymbol u}$ and $z = p$ in $\mathfrak{A}^t\left(({\boldsymbol u},p),\,({\boldsymbol v},z) \right)$ to obtain,
	\begin{align}
	\mathfrak{A}^t\left(({\boldsymbol u},p),\,({\boldsymbol u},p) \right) &= a_1^t({\boldsymbol u},{\boldsymbol u}) + a_2^t(p,p) \nonumber \\
	&\ge 2m_{11}\mu \int_{\Omega(t)} \nabla_s{\boldsymbol u}:\nabla_s{\boldsymbol u}\,\vx  + \dfrac{1 - m_{12}}{km_{12}} ||p||_{1,\Omega(t)}^2.
	\end{align}
	Then, Lemma~\ref{lemma:coer_eps} yields the coercivity of $\mathfrak{A}^t$. The following estimate yields the continuity of $\mathfrak{L}^t$:
	\begin{align}
	\mathfrak{L}^t({\boldsymbol v},z) &\le \sqrt{2}\max(1,\mathscr{H}(m_{12})\sqrt{d\mu_{\mathbb{R}^d}(\Omega(t))})(||{\boldsymbol v}||_{1,\Omega(t)}^2 + ||z||_{1,\Omega(t)}^2)^{1/2},
	\end{align}
	where $\mu_{\mathbb{R}^d}$ is the $d$-dimensional Lebesgue measure. An application of Lax-Milgram theorem establishes the existence of a unique $({\boldsymbol u},p) \in \upspace{t}$ such that~\eqref{eqn:existence_up} (hence,~\eqref{eqn:pvsys_1} and~\eqref{eqn:pvsys_2}) holds.
\end{proof}

\begin{definition}[NUM--weak solution] \label{defn:nsm_weak}
	A weak solution of the NUM in $D_T$, further referred to as NUM--weak solution,  is a five-tuple $(\alpha,\velc,p,c,\Omega)$ such that~\ref{it.sw1}-\ref{it.sw4} hold.
	\begin{enumerate}[label= $\mathrm{(SW.\arabic*)}$,ref=$\mathrm{(SW.\arabic*)}$,leftmargin=\widthof{(SW.1)}+3\labelsep]
		\item\label{it.sw1} The volume fraction satisfies $\alpha  \in L^\infty(D_T)$, $0 < m_{11} \le \alpha \le m_{12} < 1$, where $m_{11} \le m_{01}$ and $m_{02} \le m_{12}$ are  constants, and $\forall\,\varphi \in \mathscr{C}_c^{\infty}(\overline{D}_T\backslash (\{T\} \times \Omega(T)))$
		\begin{multline}
		\int_{D_T} (\alpha,\,\alpha\velc)\cdot \nabla_{(t,{\boldsymbol x})}\varphi\,\mathrm{d}t\vx + \int_{\Omega(0)} \varphi(0,{\boldsymbol x}) \alpha_0({\boldsymbol x})\,\vx + \int_{D_T} \alpha f(\alpha,c) \varphi\,\mathrm{d}t\vx, 
		\\
		\label{hyper_wf}
		= \int_{B_T}(\alpha, \velc \alpha)\cdot {\boldsymbol n}_{\vert B_T} \varphi\,\mathrm{d}s.
		\end{multline}
		\item\label{it.sw2} The velocity ${\boldsymbol u} \in \uspace$ and pressure $p \in \cspace$ satisfy~\eqref{eqn:pvsys_1} and~\eqref{eqn:pvsys_2} for every ${\boldsymbol v} \in \uspace$ and $z \in \cspace$.
		\item\label{it.sw3} The nutrient concentration is such that $c - 1 \in \cspace$, $c \ge 0$, and $\forall\, \zeta \in \cspace$ with $\partial_t\zeta \in L^2(D_T)$
		\begin{multline}
		-\int_{D_T} c\,\partial_t \zeta\,\vx\,\mathrm{d}t -  \int_{D_T} \eta \nabla c  \cdot \nabla \zeta\,\vx\,\mathrm{d}t  + \int_{\Omega(0)} c_0({\boldsymbol x})\zeta(0,{\boldsymbol x})\,\vx \\
		+ \int_{D_T}\dfrac{Q c \alpha}{1+\widehat{Q}_1c} \zeta\,\vx\,\mathrm{d}t  = 0. 
		\label{eqn:ot_wnsm}
		\end{multline}
		\item\label{it.sw4} The time-dependent boundary $\Gamma(t)$ is governed by~\eqref{bdrequation}.
	\end{enumerate}
\end{definition}

\begin{definition}[NUM--extended solution] \label{defn:nsm_ext}
	A weak solution of the NUM in $\mathscr{D}_T$, further referred to as NUM--extended solution, is a four-tuple $(\widetilde{\alpha},\widetilde{\boldsymbol u},\widetilde{p},\widetilde{c})$ such that \ref{it.se1}--\ref{it.se4} hold.
	\begin{enumerate}[label= $\mathrm{(SE.\arabic*)}$,ref=$\mathrm{(SE.\arabic*)}$,leftmargin=\widthof{(SE.1)}+3\labelsep]
		\item\label{it.se1} The function $\widetilde{\alpha}$ is such that $\widetilde{\alpha} \in L^{\infty}(\mathscr{D}_T)$, $\widetilde{\alpha} \ge 0$, and $\forall\,\widetilde{\varphi} \in \mathscr{C}_c^{\infty}([0,T) \times \Omega_\ell)$:
		\begin{multline}
		\int_{\mathscr{D}_T} (\alex,\velcex \alex)\cdot \nabla_{(t,{\boldsymbol x})}\widetilde{\varphi}\,\mathrm{d}t\,\vx + \int_{\Omega(0)} \widetilde{\varphi}(0,{\boldsymbol x}) \alpha_0({\boldsymbol x})\,\vx
		+ \int_{\mathscr{D}_T} \alex f(\alex,\Cex) \widetilde{\varphi}\,\mathrm{d}t\,\vx = 0.
		\label{eqn:huper_ex_wf}
		\end{multline}
		\item\label{it.se2} For a fixed $t$, define  $\widetilde{\Omega}(t) := \{ (t,{\boldsymbol x}) : \widetilde{\alpha}(t,{\boldsymbol x}) > 0 \}$ and $\widetilde{D}_T := \cup_{0 < t < T}\{t\} \times \widetilde{\Omega}(t)$. Then, it holds $\widetilde{\velc}_{\vert \mathscr{D}_T\backslash\overline{\widetilde{D}_{T}}} = {\boldsymbol 0}$, $\widetilde{p}_{\vert \mathscr{D}_T\backslash\overline{\widetilde{D}_{T}}} = 0$, and $\widetilde{c}_{\vert \mathscr{D}_T\backslash\overline{\widetilde{D}_{T}}} = 1$.
		\item\label{it.se3} The functions $\widetilde{\velc}_{\vert \widetilde{D}_{T}}$ and $\widetilde{p}_{\vert \widetilde{D}_{T}}$  is such that $\widetilde{\velc}_{\vert \widetilde{D}_{T}} \in H_{\nabla}^{1,u}(\widetilde{D}_T)$, $\widetilde{p}_{\vert \widetilde{D}_{T}} \in H_{\nabla}^{1,c}(\widetilde{D}_T)$ and satisfy  ~\eqref{eqn:pvsys_1}--\eqref{eqn:pvsys_2} with $\Omega(t)$, $D_T$, and $\alpha$ set as $\widetilde{\Omega}(t)$, $\widetilde{D}_T$, and $\widetilde{\alpha}_{\vert \widetilde{\Omega}(t)}$, respectively.
		\item\label{it.se4} The function $\widetilde{c}_{\vert{\widetilde{D}_T}}$ is such that $\widetilde{c}_{\vert{\widetilde{D}_T}} - 1 \in H_{\nabla}^{1,c}(\widetilde{D}_T)$ and satisfies ~\eqref{eqn:ot_wnsm} with $D_T$ set as $\widetilde{D}_T$ for all $\zeta \in H_{\nabla}^{1,c}(\widetilde{D}_T)$ with $\partial_t\zeta \in L^2(\widetilde{D}_T)$.
	\end{enumerate}
\end{definition}

\subsection{Equivalence of weak solutions}\label{sec:wf.equiv}
In this subsection, we show that Definitions~\ref{defn:nsm_weak} and~\ref{defn:nsm_ext} are equivalent to each other in an appropriate sense and under some regularity assumptions on $B_T$. In particular, we show that the recovered domain $\widetilde{D}_T$ in Definition~\ref{defn:nsm_ext} is equal to $D_T$ in Definition~\ref{defn:nsm_weak}.
\begin{definition}[Time projection map]
	The time projection map $\pi_t : \mathbb{R}^{+} \times \mathbb{R}^{d-1} \rightarrow \mathbb{R}^{+} \times \mathbb{R}^d$ is defined by $\pi_t(t,{\boldsymbol y}) = t$ for all $(t,{\boldsymbol y}) \in \mathbb{R}^{+} \times \mathbb{R}^{d-1}$.
\end{definition}

\begin{remark}[Time-slice property of $B_T$]
	\label{rem:time_slice}
	While constructing a local parametrisation for $B_T$ in the sense of Definition~\ref{defn:local_param}, we use time also as a parameter through the time projection map $\pi_t$ to preserve the `time-slice' geometry of $B_T = \cup_{t} \{t\} \times \partial\Omega(t)$ in the following way. Let $(\mathbb{R}^{+} \times U_{\boldsymbol \omega},\mathbb{R}^{+} \times V_{\boldsymbol \omega}, \sigma_{\boldsymbol w} = (\pi_t,{\boldsymbol \gamma}_{\boldsymbol \omega}))$  be a local parametrisation around ${\boldsymbol w} \in B_T$ of the  evolving boundary $B_T$ in the sense of Definition~\ref{defn:local_param}. Then, for a fixed time, $t$, the restriction $\{U_{\boldsymbol \omega},V_{\boldsymbol \omega},{\boldsymbol \gamma}_{\boldsymbol \omega}(t,\cdot)\}_{\boldsymbol \omega \in \{t\} \times \partial \Omega(t)}$ is a local parametrisation of $\partial \Omega(t)$. The time-slice structure of a local parametrisation for $B_T$ is crucial in proving Proposition~\ref{prop:normal}.
\end{remark}

\noindent The next proposition provides a formula for the unit normal vector to the hypersurface $B_T$ in terms of local parametrisations.

\begin{proposition} \label{prop:normal}
	Let $\{\mathbb{R}^{+} \times U_{\boldsymbol \omega},\mathbb{R}^{+} \times V_{\boldsymbol \omega}, \sigma_{\boldsymbol w} = (\pi_t,{\boldsymbol \gamma}_{\boldsymbol \omega})\}_{\boldsymbol \omega}$ be a local parametrisation of $B_T$ as in Remark~\ref{rem:time_slice} and $\{ \mathcal{O}_{\boldsymbol \omega}, f_{\boldsymbol \omega}\}$ be a $\mathscr{C}^1$--smooth local representation of it in the sense of Definition~\ref{defn:local_rep}, where ${\boldsymbol \omega} = (t,{\boldsymbol z}) \in B_T$. Then, the unit normal to the hypersurface $B_T$ can be expressed as follows:
	\begin{equation}
	{\boldsymbol n}_{B_T} = \dfrac{( -\nabla f_{\boldsymbol \omega}\cdot\partial_t {\boldsymbol \gamma}_{\boldsymbol \omega},\nabla f_{\boldsymbol \omega} )}{\left\vert\left\vert ( -\nabla f_{\boldsymbol \omega}\cdot\partial_t {\boldsymbol \gamma}_{\boldsymbol \omega},\nabla f_{\boldsymbol \omega} ) \right\vert\right\vert}_{2}.
	\label{eqn:normal_bt}
	\end{equation}
	\label{prop_normal}
\end{proposition}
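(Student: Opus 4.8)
The plan is to work locally near a point $\boldsymbol\omega = (t,{\boldsymbol z}) \in B_T$ using the time-slice local parametrisation $\sigma_{\boldsymbol w} = (\pi_t,{\boldsymbol\gamma}_{\boldsymbol\omega})$ and the $\mathscr{C}^1$-smooth local representation $\{\mathcal{O}_{\boldsymbol\omega},f_{\boldsymbol\omega}\}$, and to exploit two facts: first, the unit normal to $B_T$ at $\boldsymbol\omega$ is parallel to $\nabla_{(t,{\boldsymbol x})} f_{\boldsymbol\omega}(\boldsymbol\omega)$ (by Definition~\ref{defn:local_rep} and the remark following Definition~\ref{defn:local_param}); second, that $f_{\boldsymbol\omega}$ vanishes identically on the image of the coordinate chart, i.e. $f_{\boldsymbol\omega}(\sigma_{\boldsymbol w}(t,{\boldsymbol y})) = 0$ for all $(t,{\boldsymbol y})$ in the relevant open set of $\mathbb{R}^+\times\mathbb{R}^{d-1}$. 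Differentiating this identity in the parameter directions will pin down $\nabla_{(t,{\boldsymbol x})} f_{\boldsymbol\omega}$ up to scalar multiple, which is all that is needed.

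First I would write the full space-time gradient of $f_{\boldsymbol\omega}$ as $\nabla_{(t,{\boldsymbol x})} f_{\boldsymbol\omega} = (\partial_t f_{\boldsymbol\omega}, \nabla_{\boldsymbol x} f_{\boldsymbol\omega})$, and abbreviate $\nabla f_{\boldsymbol\omega} := \nabla_{\boldsymbol x} f_{\boldsymbol\omega}$ as in the statement. Differentiating $f_{\boldsymbol\omega}(t,{\boldsymbol\gamma}_{\boldsymbol\omega}(t,{\boldsymbol y})) \equiv 0$ with respect to each spatial parameter $y_j$ ($j = 1,\dots,d-1$) gives by the chain rule $\nabla f_{\boldsymbol\omega}\cdot \partial_{y_j}{\boldsymbol\gamma}_{\boldsymbol\omega} = 0$; since the vectors $\{\partial_{y_j}{\boldsymbol\gamma}_{\boldsymbol\omega}\}_{j=1}^{d-1}$ span the tangent space of $\partial\Omega(t)$ at ${\boldsymbol z}$ (this uses the time-slice property from Remark~\ref{rem:time_slice}, which guarantees ${\boldsymbol\gamma}_{\boldsymbol\omega}(t,\cdot)$ is a coordinate chart of $\partial\Omega(t)$), we recover that $\nabla f_{\boldsymbol\omega}$ is normal to $\Gamma(t)$ — consistent with the $d-1$ spatial tangent directions of $B_T$ at $\boldsymbol\omega$ being of the form $(0,\partial_{y_j}{\boldsymbol\gamma}_{\boldsymbol\omega})$. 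Next, differentiating the same identity with respect to $t$ gives $\partial_t f_{\boldsymbol\omega} + \nabla f_{\boldsymbol\omega}\cdot\partial_t{\boldsymbol\gamma}_{\boldsymbol\omega} = 0$, i.e. $\partial_t f_{\boldsymbol\omega} = -\nabla f_{\boldsymbol\omega}\cdot\partial_t{\boldsymbol\gamma}_{\boldsymbol\omega}$. Assembling these, $\nabla_{(t,{\boldsymbol x})} f_{\boldsymbol\omega} = (-\nabla f_{\boldsymbol\omega}\cdot\partial_t{\boldsymbol\gamma}_{\boldsymbol\omega},\, \nabla f_{\boldsymbol\omega})$, and normalising gives exactly \eqref{eqn:normal_bt}.

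The last point to address is orientation: \eqref{eqn:normal_bt} determines ${\boldsymbol n}_{B_T}$ only up to sign, as does the representation $f_{\boldsymbol\omega}$, so the statement should be read as an identity of unit normals up to the ambient choice of outward orientation; I would remark that choosing $f_{\boldsymbol\omega}$ with the standard sign convention (negative inside $D_T$) makes the two sides agree, or simply note the formula holds up to sign, which suffices for the use made of it in the equivalence theorem. The main obstacle — really the only non-bookkeeping step — is justifying that $\{\partial_{y_j}{\boldsymbol\gamma}_{\boldsymbol\omega}\}$ together with $\partial_t(\pi_t,{\boldsymbol\gamma}_{\boldsymbol\omega}) = (1,\partial_t{\boldsymbol\gamma}_{\boldsymbol\omega})$ actually span the full tangent space of $B_T$ at $\boldsymbol\omega$, so that a vector orthogonal to all of them is genuinely normal to $B_T$; this is where the time-slice structure of the parametrisation (Remark~\ref{rem:time_slice}) and the $\mathscr{C}^1$-regularity of $B_T$ in space and time are essential, since they ensure $\sigma_{\boldsymbol w}$ is an immersion with the first coordinate being $t$, so the $d$ parameter-derivatives are linearly independent. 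Everything else is the chain rule.
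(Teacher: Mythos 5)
Your proposal is correct and follows essentially the same route as the paper: take $\nabla_{(t,\boldsymbol{x})}f_{\boldsymbol\omega}=(\partial_t f_{\boldsymbol\omega},\nabla f_{\boldsymbol\omega})$ as a normal to $B_T$, differentiate the identity $f_{\boldsymbol\omega}(t,\boldsymbol{\gamma}_{\boldsymbol\omega}(t,\boldsymbol{y}))=0$ in $t$ to get $\partial_t f_{\boldsymbol\omega}=-\nabla f_{\boldsymbol\omega}\cdot\partial_t\boldsymbol{\gamma}_{\boldsymbol\omega}$, and normalise. Your additional remarks on the spatial parameter derivatives spanning the tangent space of $\Gamma(t)$ and on the sign/orientation ambiguity are sensible elaborations of points the paper leaves implicit, but they do not change the argument.
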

\begin{proof}
	A (non-unit) normal to $B_T$ at the point $(t,{\boldsymbol z}) \in B_T\cap \mathcal{O}_{\boldsymbol \omega}$ can be expressed as  $\nabla_{(t,{\boldsymbol x})}f_{\boldsymbol \omega}(t,{\boldsymbol z}) = (\partial_t f_{\boldsymbol \omega}(t,{\boldsymbol z}),\nabla f_{\boldsymbol \omega}(t,{\boldsymbol z}))$. Definition~\ref{defn:local_param} yields a point $(t,{\boldsymbol y}) \in \mathbb{R}^{+} \times U_{\boldsymbol \omega}$ such that $(t,{\boldsymbol z}) = (t,{\boldsymbol \gamma}_{\boldsymbol \omega}(t,{\boldsymbol y}))$ . Since $f_{\boldsymbol \omega}$ is zero in $B_T\cap  \mathcal{O}_{\boldsymbol \omega}$ the time derivative $\frac{\mathrm{d}}{\mathrm{d}t} f_{\boldsymbol \omega}(t,{\boldsymbol \gamma}(t,{\boldsymbol y}))$ is also zero. Therefore, in $B_T\cap \mathcal{O}_{\boldsymbol \omega}$
	\begin{equation*}
	\partial_t f_{\boldsymbol \omega}(t,{\boldsymbol z}) = -\nabla f_{\boldsymbol \omega}(t,{\boldsymbol z}) \cdot\partial_t {\boldsymbol \gamma}_{\boldsymbol \omega}(t,{\boldsymbol y})
	\end{equation*}
	and a normal to $B_T$ at $(t,{\boldsymbol z})$ is provided by
	\begin{equation*}
	\nabla_{(t,{\boldsymbol x})}f_{\boldsymbol \omega}(t,{\boldsymbol z}) = ( -\nabla f_{\boldsymbol \omega}(t,{\boldsymbol z}) \cdot\partial_t {\boldsymbol \gamma}_{\boldsymbol \omega}(t,{\boldsymbol y}),\nabla f_{\boldsymbol \omega}(t,{\boldsymbol z}) ),
	\end{equation*}
	normalisation of which yields~\eqref{eqn:normal_bt}.
\end{proof}

\begin{remark}
	Since $\{ \mathcal{O}_{\boldsymbol \omega}, f_{\boldsymbol \omega}\}$  is a $\mathscr{C}^1-$smooth local representation of the hypersurface $B_T$, for a fixed time $t$, the unit normal to the boundary $\Gamma(t)$ is given by $-\nabla f_{\boldsymbol \omega}/||\nabla f_{\boldsymbol \omega}||_{2}$.
	\label{rem_normal}
\end{remark}

Next, we present the equivalence between the weak formulations~\ref{it.se1} and~\ref{it.sw1}.

\begin{theorem}[Equivalence]
	\label{thm:eq_thm_1}
	\begin{enumerate}[label= $\mathrm{(ET.\alph*)}$,ref=$\mathrm{(ET.\alph*)}$,leftmargin=\widthof{(ET.a)}+3\labelsep]
		\item\label{it.eta} Let $B_T$ be $\mathscr{C}^1$--regular and  $(\alpha,\velc,p,c,\Omega)$ be a NUM-weak solution. Set $\widetilde{\alpha} := \alpha$, $\velcex := \velc$, $\widetilde{p} := p$ and $\Cex := c$ in $D_T$; $\widetilde{\alpha} := 0$, $\velcex := {\boldsymbol 0}$, $\widetilde{p} := 0$ and $\Cex := 1$ in $\mathscr{D}_T \backslash \overline{D}_T$. If $\alpha \in BV(D_T)$, then $(\alex,\velcex,\widetilde{p},\Cex,\widetilde{\Omega})$ is a NUM-extended solution.
		\item\label{it.etb}  Let $(\alex,\velcex,\widetilde{p},\Cex,\widetilde{\Omega})$ be a NUM-extended solution and assume that $\widetilde{B}_T := \partial \widetilde{D}_T \backslash ([\{0\} \times \Omega(0)] \cup [\{T\} \times \widetilde{\Omega}(T)])$ is $\mathscr{C}^1$--regular, where $\widetilde{D}_T$ is given by~\ref{it.se2} in Definition~\ref{defn:nsm_ext} and $\widetilde{\alpha}_{\vert \widetilde{D}_T} > 0$ on $\widetilde{B}_T$. If there exist constants $0 < \widetilde{m}_{11} \le m_{01}$ and $m_{02} \le \widetilde{m}_{12} < 1$ such that $\widetilde{m}_{11} \le \alex_{\vert\widetilde{D}_T} \le \widetilde{m}_{12}$ and $\alex \in BV(\mathscr{D}_T)$, then $\widetilde{D}_T = D_T$ and $(\alex_{\vert D_T},\velcex_{\vert D_T},\widetilde{p}_{\vert D_T},\Cex_{\vert D_T},\widetilde{\Omega})$ is a NUM-weak solution.
	\end{enumerate}
\end{theorem}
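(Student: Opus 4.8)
The plan is to exploit that a NUM--weak solution carries \emph{two} conditions attached to the moving interface: the conservation law~\eqref{hyper_wf}, which contains the boundary flux $\int_{B_T}(\alpha,\velc\alpha)\cdot{\boldsymbol n}_{\vert B_T}\varphi\,\mathrm{d}s$, and the kinematic law~\eqref{bdrequation}; and to observe that together they force that flux to vanish. Concretely, taking a $\mathscr{C}^1$--smooth local representation $\{\mathcal{O}_{\boldsymbol\omega},f_{\boldsymbol\omega}\}$ of $B_T$ built from the time--slice parametrisation of Remark~\ref{rem:time_slice}, Proposition~\ref{prop:normal} gives ${\boldsymbol n}_{\vert B_T}$ parallel to $(-\nabla f_{\boldsymbol\omega}\cdot\partial_t{\boldsymbol\gamma}_{\boldsymbol\omega},\,\nabla f_{\boldsymbol\omega})$, while Remark~\ref{rem_normal} gives the spatial outer normal to $\Gamma(t)$ as $-\nabla f_{\boldsymbol\omega}/\|\nabla f_{\boldsymbol\omega}\|_2$; hence $(\alpha,\velc\alpha)\cdot{\boldsymbol n}_{\vert B_T}$ is a positive multiple of $\alpha\,\nabla f_{\boldsymbol\omega}\cdot(\velc-\partial_t{\boldsymbol\gamma}_{\boldsymbol\omega})=-\alpha\,\|\nabla f_{\boldsymbol\omega}\|_2\,(\velc-\partial_t{\boldsymbol\gamma}_{\boldsymbol\omega})\cdot{\boldsymbol n}_{\vert\Gamma(t)}$, which vanishes by~\eqref{bdrequation}. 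I would record this identity once and reuse it in both implications. Note that conditions~\ref{it.sw2}--\ref{it.sw3} and~\ref{it.se3}--\ref{it.se4} are the same statements---same equations, same function spaces---as soon as the two time--space domains are identified, so the real content lies in transferring the hyperbolic equation across the interface and in the identification $\widetilde{D}_T=D_T$.

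For~\ref{it.eta}, I would first observe that $\alpha\ge m_{11}>0$ on $D_T$ and $\alex=0$ on $\mathscr{D}_T\setminus\overline{D}_T$ force $\widetilde{\Omega}(t)=\{\alex(t,\cdot)>0\}=\Omega(t)$ (for the natural representative of $\alex$), so $\widetilde{D}_T=D_T$ and~\ref{it.se2} hold by construction and~\ref{it.se3}--\ref{it.se4} follow immediately from~\ref{it.sw2}--\ref{it.sw3}. For~\ref{it.se1}, given $\widetilde{\varphi}\in\mathscr{C}_c^\infty([0,T)\times\Omega_\ell)$, I would simply plug $\varphi:=\widetilde{\varphi}_{\vert\overline{D}_T}$ into~\eqref{hyper_wf}: because $\alex=\alpha$ on $D_T$ and $\alex=0$ on $\mathscr{D}_T\setminus\overline{D}_T$, the three terms on the left of~\eqref{hyper_wf} equal exactly the three terms on the left of~\eqref{eqn:huper_ex_wf} written for $\widetilde{\varphi}$ (the slice $\{0\}\times\Omega(0)$ producing $\int_{\Omega(0)}\widetilde{\varphi}(0,\cdot)\alpha_0\,\vx$), while the right-hand side $\int_{B_T}(\alpha,\velc\alpha)\cdot{\boldsymbol n}_{\vert B_T}\widetilde{\varphi}\,\mathrm{d}s$ --- a genuine surface integral since $\alpha\in BV(D_T)$ (the hypothesis of~\ref{it.eta}) endows $\alpha$ with a trace on $B_T$, and $\velc\in H^1(D_T)$ with one too --- vanishes by the first paragraph, using~\eqref{bdrequation} from~\ref{it.sw4}. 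Hence~\eqref{eqn:huper_ex_wf} holds and the extension is a NUM--extended solution.

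For~\ref{it.etb}, I would set $\Omega(t):=\widetilde{\Omega}(t)$, $D_T:=\widetilde{D}_T$, and read~\eqref{eqn:huper_ex_wf} as saying that the space--time field $(\alex,\alex\velcex)$ has distributional divergence $\alex f(\alex,\Cex)\in L^2(\mathscr{D}_T)$ (it is bounded, using the bounds on $\alex$ and $\Cex$) with initial trace $\alpha_0$ on $\Omega(0)$ and $0$ on $\Omega_\ell\setminus\Omega(0)$---so $\widetilde{\Omega}(0)=\Omega(0)$ since $\alpha_0\ge m_{01}>0$. Since $\widetilde{B}_T$ is $\mathscr{C}^1$ and $(\alex,\alex\velcex)$ is piecewise smooth (equal to $(\alex,\alex\velcex)$ with $\alex\in BV$ and $\velcex\in H^1$ on $\widetilde{D}_T$, and to ${\boldsymbol 0}$ on $\mathscr{D}_T\setminus\overline{\widetilde{D}_T}$) with $L^2$ divergence, the zero--normal--jump fact recalled in Section~\ref{sec:wf.equiv} forces the inner trace of $(\alex,\alex\velcex)\cdot{\boldsymbol n}_{\vert\widetilde{B}_T}$ to equal the vanishing outer trace, hence to be $0$ on $\widetilde{B}_T$. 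Substituting Proposition~\ref{prop:normal} and Remark~\ref{rem_normal} and dividing by $\alex$---licit because $\alex_{\vert\widetilde{D}_T}\ge\widetilde{m}_{11}>0$ on $\widetilde{B}_T$ by hypothesis---gives $(\velcex-\partial_t{\boldsymbol\gamma})\cdot{\boldsymbol n}_{\vert\widetilde{\Gamma}(t)}=0$, which is~\eqref{bdrequation} for $\widetilde{\Gamma}(t)$, i.e.~\ref{it.sw4}. With~\eqref{bdrequation} available the interface flux in~\eqref{hyper_wf} vanishes by the same computation, so~\eqref{hyper_wf} follows from~\eqref{eqn:huper_ex_wf} by restricting and extending test functions between $\overline{\widetilde{D}_T}$ and $[0,T)\times\Omega_\ell$ (possible because $\widetilde{B}_T$ is $\mathscr{C}^1$, after a routine density/mollification argument); the $L^\infty$ bounds of~\ref{it.sw1} come from $\widetilde{m}_{11}\le m_{01}$ and $m_{02}\le\widetilde{m}_{12}$, and~\ref{it.sw2}--\ref{it.sw3} are~\ref{it.se3}--\ref{it.se4}. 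Thus $(\alex_{\vert D_T},\velcex_{\vert D_T},\widetilde{p}_{\vert D_T},\Cex_{\vert D_T},\widetilde{\Omega})$ is a NUM--weak solution, and $\widetilde{D}_T=D_T$ by the choice made at the outset.

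I expect the main obstacle to be the trace and zero--normal--jump analysis for the low--regularity field $(\alex,\alex\velcex)$: one has to give a precise meaning to the normal trace of the product $\alex\velcex$ (with $\alex\in BV\cap L^\infty$ and $\velcex\in H^1$, which in two dimensions is not bounded) on the $\mathscr{C}^1$ hypersurface $\widetilde{B}_T$ (and similarly for $(\alpha,\alpha\velc)$ on $B_T$ in~\ref{it.eta}), check that the jump formula across it holds under only the assumption $\mathrm{div}(\alex,\alex\velcex)\in L^2$, and then splice this correctly with the normal formula of Proposition~\ref{prop:normal}, whose validity rests on the time--slice structure of the local parametrisation (Remark~\ref{rem:time_slice}). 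Everything else---matching the initial--slice terms, extending test functions across the interface, transporting the velocity--pressure and nutrient conditions, and the nutrient positivity $\Cex\ge 0$ (say, via a comparison principle on $\widetilde{D}_T$)---is bookkeeping once $\widetilde{D}_T=D_T$ is known.
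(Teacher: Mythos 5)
Your proposal is correct and follows essentially the same route as the paper: the flux $(\alpha,\alpha\velc)\cdot{\boldsymbol n}_{\vert B_T}$ is shown to vanish by combining Proposition~\ref{prop:normal}, Remark~\ref{rem_normal} and \eqref{bdrequation} for~\ref{it.eta}, and for~\ref{it.etb} the zero normal jump of the piecewise field $(\alex,\alex\velcex)$ (whose space--time divergence lies in $L^2(\mathscr{D}_T)$, with traces supplied by the $BV$ hypotheses) recovers \eqref{bdrequation} and hence $\widetilde{D}_T=D_T$. Your extra bookkeeping (isolating the flux identity as a reusable step, noting $\widetilde\Omega(t)=\Omega(t)$ from $\alpha\ge m_{11}>0$) only makes explicit what the paper leaves implicit.
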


\begin{mproof}
	\begin{enumerate}[label= $\mathrm{(ET.\alph*)}$,leftmargin=\widthof{(ET.a)}+3\labelsep]
		\item \label{et.a} Let $\{\mathbb{R}^{+} \times U_{\boldsymbol \omega},\mathbb{R}^{+} \times V_{\boldsymbol \omega}, \sigma_{\boldsymbol w} = (\pi_t,{\boldsymbol \gamma}_{\boldsymbol \omega})\}_{\boldsymbol \omega}$ be  a local parametrisation of $B_T$. Choose $\widetilde{\varphi}$ belonging to $\mathscr{C}_c^{\infty} \left([0,T)\times \Omega_\ell\right)$. Since $\widetilde{\varphi}_{\vert{D_T}} \in \mathscr{C}_c^{\infty}(\overline{D}_T\backslash (\{T\} \times \Omega(T)))$ and $\widetilde{\alpha} = 0$ in $\mathscr{D}_{T}\backslash \overline{D_{T}}$, the following holds:
		\begin{subequations}
			\begin{multline}
			\int_{D_T} (\alex,\alex\velcex)\cdot \nabla_{(t,{\boldsymbol x})}\widetilde{\varphi}\,\mathrm{d}t\,\vx + \int_{\Omega(0)} \widetilde{\varphi}(0,{\boldsymbol x}) \alpha_0({\boldsymbol x})\,\vx + \int_{D_T} \alex f(\alex,\Cex) \widetilde{\varphi}\,\mathrm{d}t\,\vx  \\
			= \int_{B_T} (\alpha, \alpha {\boldsymbol u})\cdot {\boldsymbol n}_{B_T}\widetilde{\varphi}\,\mathrm{d}s 
			\label{alex_dt} 
			\end{multline}
			and
			\begin{equation}
			\int_{\mathscr{D}_T\backslash D_T} (\alex,\alex\velcex)\cdot \nabla_{(t,{\boldsymbol x})}\widetilde{\varphi}\,\mathrm{d}t\,\vx +  \int_{\mathscr{D}_T\backslash D_T} \alex f(\alex,\Cex) \widetilde{\varphi}\,\mathrm{d}t\,\vx = 0.
			\label{alex_dtc}
			\end{equation}
		\end{subequations}
		A use of Proposition \ref{prop_normal} and Remark \ref{rem_normal} yields
		\begin{equation}
		K_N (\alpha, \alpha {\boldsymbol u})_{\vert B_T}\cdot {\boldsymbol n}_{B_T} = (\alpha, \alpha {\boldsymbol u})_{\vert B_T}\cdot \left(-{\boldsymbol n}_{\vert{\Gamma(t)}} \cdot \partial_t {\boldsymbol \gamma}_{\boldsymbol \omega},{\boldsymbol n}_{\vert{\Gamma(t)}} \right),
		\label{eqn:au_brelation}
		\end{equation}
		where $K_N\not=0$ is a normalisation constant. 
		We then use \eqref{bdrequation} in~\eqref{eqn:au_brelation} to obtain $(\alpha, \alpha {\boldsymbol u})_{\vert B_T}\cdot {\boldsymbol n}_{B_T} = 0$. Add \eqref{alex_dtc} and \eqref{alex_dt} to arrive at~\eqref{eqn:huper_ex_wf}. The conditions on $\widetilde{ \boldsymbol u},\,\widetilde{p}$, and $\widetilde{c}$ follow naturally from Definition~\ref{defn:nsm_ext}.
		\item \label{et.b} Let $\{\mathbb{R}^{+} \times U_{\boldsymbol \omega},\mathbb{R}^{+} \times V_{\boldsymbol \omega}, \sigma_{\boldsymbol w} = (\pi_t,{\boldsymbol \gamma}_{\boldsymbol \omega})\}_{\boldsymbol \omega}$ be  a local parametrisation of $\widetilde{B}_T$. Define a vector field $\vecfd : \mathscr{D}_T \rightarrow \mathbb{R}^{d+1}$ by $\vecfd := (\widetilde{\alpha},\widetilde{\alpha}\velcex)$. For $(t_0,{\boldsymbol x}_0) \in \widetilde{B}_T$, define 
		\begin{equation}
		\vecfd_{\vert{\widetilde{B}_T^{+}}}(t_0,{\boldsymbol x}_0) := \displaystyle \lim_{\tiny \begin{array}{c}
			(t,{\boldsymbol x}) \rightarrow (t_0,{\boldsymbol x}_0) \\
			(t,{\boldsymbol x}) \in \widetilde{D}_T
			\end{array}}{\boldsymbol F}(t,{\boldsymbol x}),\;\; \vecfd_{\vert{\widetilde{B}_T^{-}}}(t_0,{\boldsymbol x}_0) := \displaystyle \lim_{\tiny \begin{array}{c}
			(t,{\boldsymbol x}) \rightarrow (t_0,{\boldsymbol x}_0) \\
			(t,{\boldsymbol x}) \in \mathscr{D}_T\backslash \overline{\widetilde{D}}_T
			\end{array}}{\boldsymbol F}(t,{\boldsymbol x}).
		\end{equation}
		The fact that $\vecfd = {\boldsymbol 0}$ in $\mathscr{D}_T\backslash \overline{\widetilde{D}}_T$ (since $\alex_{\vert \mathscr{D}_T\backslash \overline{\widetilde{D}}_T} = 0$ from (SE.2)) yields $\vecfd\vert_{\widetilde{B}_T^{-}} = {\boldsymbol 0}$ and hence,  
		\begin{equation}
		\int_{\widetilde{B}_T} \varphi(\widetilde{\alpha},\widetilde{\alpha}\velcex)_{\vert \widetilde{D}_T}\cdot {\boldsymbol n}_{\widetilde{B}_T}\,\mathrm{d}s = \int_{\widetilde{B}_T}  \left(\vecfd_{\vert{\widetilde{B}_T^{+}}} - \vecfd_{\vert{\widetilde{B}_T^{-}}}\right)\cdot{\boldsymbol n}_{\widetilde{B}_T} \varphi \mathrm{d}s.
		\label{eqn:bvalues_etb}
		\end{equation}
		Since the weak divergence of $\vecfd$ given by $-\widetilde{\alpha}f(\widetilde{\alpha},\widetilde{c})$ belongs to $L^2(\mathscr{D}_T)$, the normal jump $(\vecfd_{\vert{\widetilde{B}_T^{+}}} - \vecfd_{\vert{\widetilde{B}_T^{-}}})\cdot\boldsymbol{n}_{\widetilde{B}_T} $ is zero. Consequently,  $(\widetilde{\alpha},\widetilde{\alpha}\velcex)_{\vert \widetilde{D}_T}\cdot {\boldsymbol n}_{\widetilde{B}_T} = 0$ on $\widetilde{B}_T$. Then, the fact that $\widetilde{\alpha}_{\vert \widetilde{D}_T} > 0$ on $\widetilde{B}_T$, Proposition~\ref{prop_normal}, and Remark~\ref{rem_normal} yield
		\begin{equation}
		\partial_t \widetilde{\boldsymbol \gamma}_{\boldsymbol \omega}\cdot{\boldsymbol n}_{\vert{\widetilde{\Gamma}(t)}} = \velcex_{\vert{\widetilde{\Gamma}(t)}}\cdot{\boldsymbol n}_{\vert{\widetilde{\Gamma}(t)}}.
		\label{eqn:boundary_mod}
		\end{equation}
		Since $\widetilde{\boldsymbol \gamma}_{\boldsymbol \omega}(0,\cdot) = {\boldsymbol \gamma}_{\boldsymbol \omega}(0,\cdot)$,~\eqref{eqn:boundary_mod} yields  $\widetilde{D}_T = D_T$. Choose $\widetilde{\varphi} \in \mathscr{C}_c^\infty(\overline{D_T}\backslash(\{T\} \times \Omega(T)))$. Define $\varphi \in \mathscr{C}_c^{\infty}([0,T) \times \Omega_\ell)$ such that $\varphi = \widetilde{\varphi}$ in $D_T$. Since $\widetilde{D}_T = D_T$ and  $\widetilde{\alpha}= 0$ on $\mathscr{D}\backslash \overline{D}_T$,~\eqref{eqn:boundary_mod} yields
		\begin{multline}
		\int_{D_T} (\alex,\velcex \alex)\cdot \nabla_{(t,{\boldsymbol x})}\widetilde{\varphi}\,\mathrm{d}t\,\vx  +  \int_{\Omega(0)} \widetilde{\varphi}(0,{\boldsymbol x}) \alpha_0({\boldsymbol x})\,\vx + \int_{D_T}\alex f(\alex,\Cex) \widetilde{\varphi}\,\mathrm{d}t\,\vx \nonumber \\
		=  \int_{B_T} \widetilde{\varphi}(\alex,\velcex \alex)\cdot {\boldsymbol n}_{B_T}\,\mathrm{d}s.
		\end{multline}
		Therefore,  $\widetilde{\alpha}_{\vert D_T}$ satisfies $\eqref{hyper_wf}$. The conditions on $\velcex_{\vert D_T},\,\widetilde{p}_{\vert D_T},\,\textrm{ and }\Cex_{\vert D_T}$ follow from Definition~\ref{defn:nsm_weak}. \qed
	\end{enumerate}
\end{mproof}

\begin{remark}
	The properties that $\alpha \in BV(D_T)$ and $\widetilde{\alpha} \in BV(\mathscr{D}_T)$ are necessary in the proof of~\ref{et.a} and~\ref{et.b}, respectively so that the boundary values in~\eqref{eqn:au_brelation} and~\eqref{eqn:bvalues_etb} are well defined in sense of traces (see Theorem 1~\cite[p.~177]{Evans20151}).
\end{remark}

\section{Numerical scheme}
\label{sec:n_scheme}
\subsection{Discretisation}
Here, we consider for simplicity that the spatial dimension is equal to 2.
The temporal domain $[0,T]$ is uniformly partitioned into $N$ intervals, $\mathcal{T}_n = (t_{n},t_{n+1})$, with $\delta = t_{n+1} - t_{n}$ for $n = 0,\ldots,N-1$, where $t_0 = 0$ and $t_{N} = T$. Let $\mathscr{T} = \{K_{j}\}_{j=1,\ldots,J}$ be a conforming Delaunay partition of the domain $\Omega_\ell$ into triangles. The following notations will be followed in the sequel. For $i, j = 1,\ldots,J$,  
\begin{itemize}
	\item ${\boldsymbol z}_j$: centroid of the $K_j$, $a_j$:  area of the $K_j$,
	\item $\mathcal{E}(j)$: set of all triangles sharing a common edge with $K_j$; $\mathcal{V}(j)$: set of all vertices of a triangle $K_j$,
	\item $e_{ji}$: common edge between triangles $K_j$ and $K_i$; ${\boldsymbol m}_{ji}$: mid point of $e_{ji}$; ${\boldsymbol n}_{ji}$:  unit normal to the edge $e_{ji}$ pointing from the triangle $K_j$; $\ell_{ji}$:  length of $e_{ji}$, 
	\item $\mathscr{V} = ({\boldsymbol v}_j)_{j=1,\ldots, M}$: collection of vertices of triangles in $\mathscr{T}$,
	\item $\mathscr{B}_e$: set of all boundary edges in $\mathscr{T}$; and $\mathscr{B}_T$: set of all boundary triangles.
\end{itemize}

\begin{definition}[Discrete average]
	For any real valued function $f$ on $\mathbb{R}^2$, define the discrete average of $f$ on the triangle $K_j$ by $\{\!\!\{f\}\!\!\}_{K_j} := \sum_{{\boldsymbol v}_i \in \mathcal{V}_j} f({\boldsymbol v}_i)/3$, where $j = 1,\ldots,J$.
\end{definition}

The following aspects need to be considered when choosing a proper triangulation for $\Omega_\ell$.
\subsubsection{Mesh-locking effect}
\label{sec:mesh_lcoking}
We use a finite volume scheme to approximate the hyperbolic conservation law~\eqref{eqn:v_frac}, and it is a well-known fact that finite volume solutions exhibit the mesh-locking effect, see \cite{EGM13} and references therein. That is, the computed solution is preferentially oriented in accordance with the orientation of the triangulation. Further, the domain $\widetilde{\Omega}(t)$ obtained from~\ref{it.se1} in Definition~\ref{defn:nsm_ext}, depends on $\widetilde{\alpha}$. Therefore, the mesh-locking effect in $\widetilde{\alpha}$ at the discrete level affects the accuracy of $\widetilde{\Omega}$, and thus other variables as well. This error propagates at each time step in a compounding  fashion. One way to eliminate this problem is to use a very refined triangulation, but this increases the computational cost. The natural and cost-effective way is to use an unstructured and random triangulation. Randomness avoids any particular orientation of the triangles and thus eliminates mesh-locking from the numerical solution.

\subsubsection{Approximation of the initial domain}
\label{sec:approximation}
After triangulating  $\Omega_\ell$, we approximate the initial domain $\Omega(0)$ by the set $\Omega_h^0$ where,
\begin{equation}
\Omega_h^0 := \cup_{\{{\boldsymbol z_j} \in \Omega(0) \}} K_j. 
\label{eqn:appr_omega_o}
\end{equation}
However, this approximation of $\Omega(0)$ by $\Omega_h^0$ is not accurate if the triangles are arranged in a structured manner. We illustrate this in Figure~\ref{fig:struct_approx}, where $\Omega(0)$ - a circle centred at the origin with unit radius is approximated by $\Omega_h^0$ in different structured triangulations. Evidently, the coarse triangulations in Figures~\ref{fig:ref_5} and~\ref{fig:ref_6} with 1024 and 4096 triangles, respectively give a poor approximation of $\Omega(0)$. A reasonably good approximation is provided by the triangulation in~Figure~\ref{fig:ref_7}; however, this triangulation contains 16,384 triangles, which makes the computations expensive over multiple time steps. If the discrete approximation of $\Omega(0)$ is not smooth enough, the discrete solution loses its symmetry as time evolves and this phenomenon is observed in the work by M. E. Hubbard and H. M. Byrne~\cite{hubbard}. 
\begin{figure}[h!]
	\centering
	\hspace{0.1cm}	\begin{subfigure}[b]{0.32\textwidth}
		\includegraphics[scale=1]{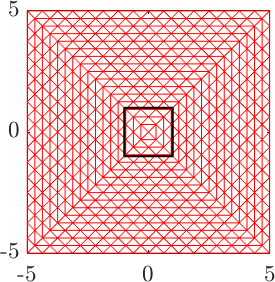}
		\caption{}
		\label{fig:trian_st_5}
	\end{subfigure}
	\begin{subfigure}[b]{0.32\textwidth}
		\includegraphics[scale=1]{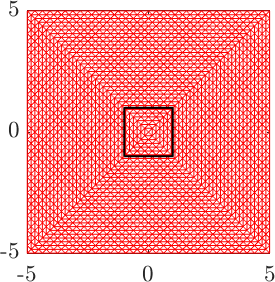}
		\caption{}
		\label{fig:trian_st_6}
	\end{subfigure}
	\begin{subfigure}[b]{0.32\textwidth}
		\includegraphics[scale=1]{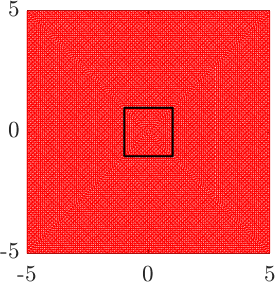}
		\caption{}
		\label{fig:trian_st_7}
	\end{subfigure}
	\newline
	\centering
	\begin{subfigure}[b]{0.31\textwidth}
		\includegraphics[scale=1]{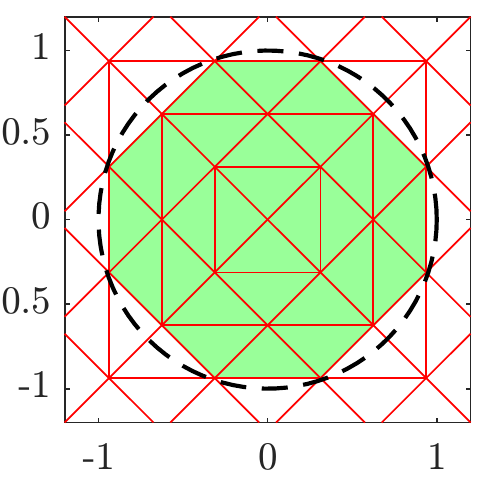}
		\caption{}
		\label{fig:ref_5}
	\end{subfigure}
	~ 
	\begin{subfigure}[b]{0.31\textwidth}
		\includegraphics[scale=1]{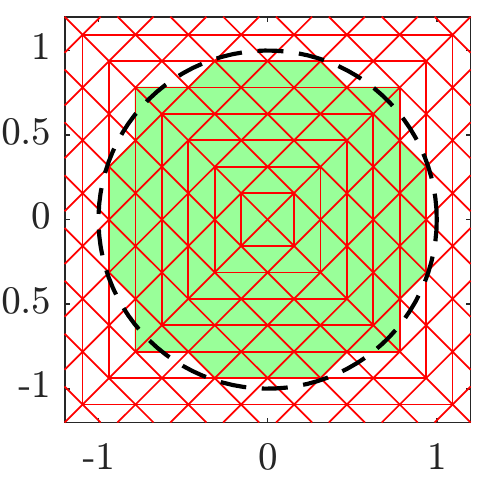}
		\caption{}
		\label{fig:ref_6}
	\end{subfigure}
	~ 
	\begin{subfigure}[b]{0.31\textwidth}
		\includegraphics[scale=1]{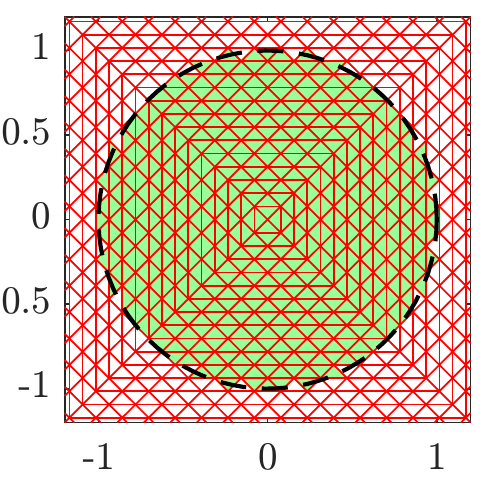}
		\caption{}
		\label{fig:ref_7}
	\end{subfigure}
	\caption{First row: Figures~\ref{fig:trian_st_5},~\ref{fig:trian_st_6} and~\ref{fig:trian_st_7} are structured triangulations of the domain $\Omega_\ell = (-5,5)^2$. Triangulations in ~\ref{fig:ref_5},~\ref{fig:ref_6}, and~\ref{fig:ref_7}, respectively contain 1024, 4096, and 16,384 triangles. Second row: Here, axes are limited to the region $(-1,1)^2$ (black box in the first row) and corresponding approximations (green region) of an initial domain in the shape of a circle centred at origin with unit radius. }\label{fig:struct_approx}
\end{figure}

\noindent We overcome the issues discussed in Subsections~\ref{sec:mesh_lcoking} and~\ref{sec:approximation} by using an adaptive and random triangulation. In particular, we employ the mesh generation of \emph{Ruppert's algorithm} put forward by J. Ruppert~\cite{Ruppert1995548}. Ruppert's algorithm is based on Delaunay refinements, and produces quality triangulations without any skinny triangles; that is every angle in a triangle is greater than a preset value $\theta_{\min}$. To obtain a good approximation of the domain $\Omega(0)$, we specify a finite number of nodes $\mathscr{N} = ({\boldsymbol N}_i)_{1\le i \le N_{0}}$ (in anti-clockwise order) on $\partial \Omega(0)$, join the neighbouring nodes ${\boldsymbol N}_{i}$ and ${\boldsymbol N}_{i+1}$ by a straight line segment denoted by ${\boldsymbol N}_{i,i+1}$, and let this collection of straight edges be denoted by $\mathscr{L}(\mathscr{N})$. This procedure gives a piecewise affine approximation of $\partial \Omega(0)$. Ruppert's algorithm constructs a triangulation such that corresponding to each straight edge  ${\boldsymbol N}_{i,i+1} \in  \mathscr{L}(\mathscr{N})$, there exists a triangle $K_j$ such that ${\boldsymbol N}_{i,i+1}$ is an edge of $K_j$.
\begin{figure}[h!]
	\centering
	\hspace{0.1cm}
	\begin{subfigure}[b]{0.3\textwidth}
		\includegraphics[scale=0.95]{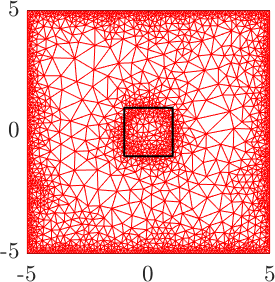}
		\caption{}
		\label{fig:tria_circle}
	\end{subfigure}
	~ 
	\begin{subfigure}[b]{0.3\textwidth}
		\includegraphics[scale=0.95]{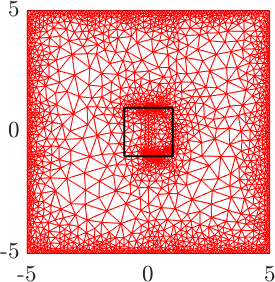}
		\caption{}
		\label{fig:tria_bullet}
	\end{subfigure}
	~ 
	\begin{subfigure}[b]{0.3\textwidth}
		\includegraphics[scale=0.95]{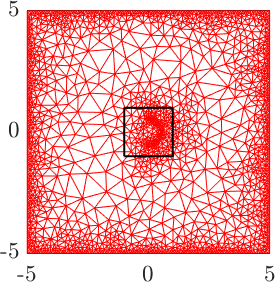}
		\caption{}
		\label{fig:tria_donut}
	\end{subfigure}
	\newline
	\begin{subfigure}[b]{0.3\textwidth}
		\includegraphics[scale=0.97]{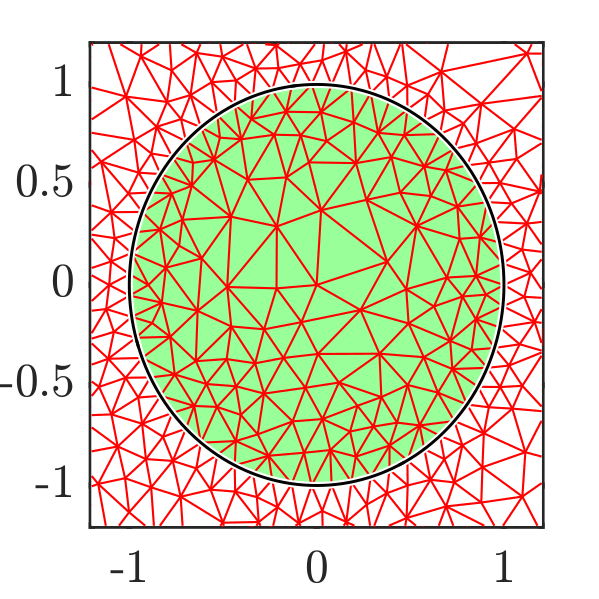}
		\caption{}
		\label{fig:rand_c}
	\end{subfigure}
	~ 
	\begin{subfigure}[b]{0.3\textwidth}
		\includegraphics[scale=0.97]{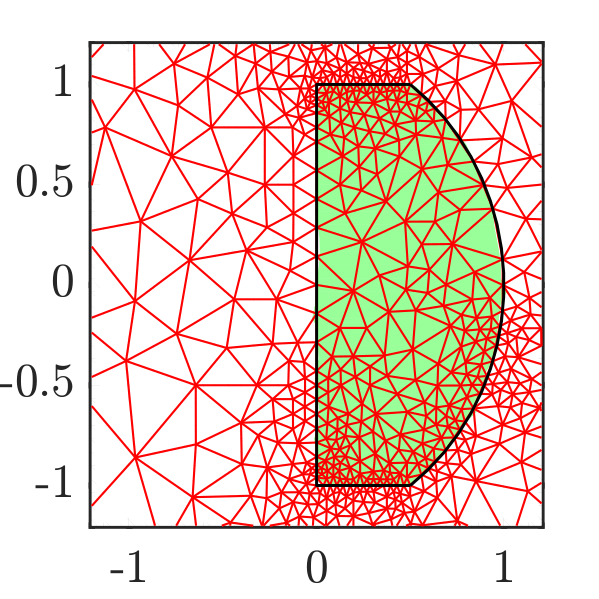}
		\caption{}
		\label{fig:rand_b}
	\end{subfigure}
	~ 
	\begin{subfigure}[b]{0.3\textwidth}
		\includegraphics[scale=0.97]{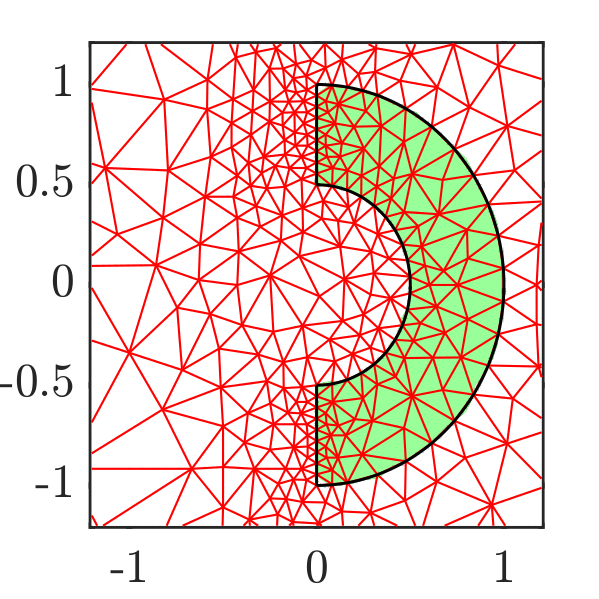}
		\caption{}
		\label{fig:rand_d}
	\end{subfigure}
	\caption{First row: Figures~\ref{fig:tria_circle},~\ref{fig:tria_bullet}, and~\ref{fig:tria_donut} are the unstructured (Ruppert-Delaunay) triangulations of the domain $\Omega_\ell = (-5,5)^2$ corresponding to initial domains with circular, bullet, and semi--annular shapes, respectively. The triangulations in~\ref{fig:tria_circle},~\ref{fig:tria_bullet}, and~\ref{fig:tria_donut}, respectively, contain 3492, 3642 and 4084 triangles. 
		Second row: Here, axes are limited to the region $(-1,1)^2$ (black box in the first row) to display the initial approximations (green region) better. }\label{fig:unstruct_approx}
\end{figure}
These aspects of Ruppert's algorithm help us to obtain a good approximation of $\Omega(0)$ irrespective of its shape. The fact that the algorithm uses reasonably few number of triangles is an added advantage. In Figure~\ref{fig:unstruct_approx}, we show the approximation of $\Omega(0)$ by $\Omega_h^0$, where the triangulations are obtained by Ruppert's algorithm. The circular, bullet-shaped and semi-annulus shaped domains, respectively shown in Figures~\ref{fig:rand_c},~\ref{fig:rand_b}, and~\ref{fig:rand_d}; are well approximated by the corresponding triangulations. In each case, we require fewer than 4100 triangles to obtain a good approximation of $\Omega(0)$ as opposed to $16,384$ triangles in the case of a structured triangulation (see Figure~\ref{fig:ref_7}). This illustrates the economical advantage of Ruppert's algorithm.

Next, we present the numerical scheme. We discretise~\eqref{eqn:v_frac} using a finite volume method,~\eqref{eqn:velocity}-\eqref{eqn:pressure} using Lagrange $\mathbb{P}_2-\mathbb{P}_1$ Taylor-Hood finite element method and~\eqref{eqn:o_tension} using a backward Euler in time and $\mathbb{P}_1$ mass lumped finite element method.

\begin{definition}[Discrete scheme for the NUM model] \label{defn:discrete_scheme}
	Define
	\begin{itemize}
		\item $\alpha_h^0$ by $\alpha_h^0 := \alpha_j^0$ on $K_j$, for $j=1,\ldots,J$, where $\alpha_j^0 := \fint_{K_j}\alpha_0(\boldsymbol{x})\,\vx$.
		\item $c_{h}^0$ by $c_{h\vert{K_j}}^0 \in \mathbb{P}_1(K_j)$ for $j=1,\ldots,J$, where $c_{h}^0({\boldsymbol v}_i) = c_0({\boldsymbol v}_i)$ for $i = 0,\ldots,M$.
		\item $\Omega_h^0$ is given by~\eqref{eqn:appr_omega_o}.
	\end{itemize}
	Fix a threshold $\athr \in (0,1)$ and $\Omega_\ell$ such that $\Omega_{h}^0 \subset \Omega_\ell$. The function ${\boldsymbol u}_h^0$ is obtained from~\ref{it.dsc} by taking $n= 0$. Construct a finite sequence of 4-tuple of functions $(\alpha_h^n,{\boldsymbol u}_h^n,p_h^n,c_{h}^n)_{\{1\le n\le N\}}$ on $\Omega_\ell$ such that 
	for all $1 \leq n \leq N$, \ref{it.dsa}--\ref{it.dsd} hold.
	\begin{enumerate}[label= $\mathrm{(DS.\alph*)}$,leftmargin=\widthof{(DS.a)}+3\labelsep]
		\item\label{it.dsa}  $\alpha_h^n := \alpha_j^n$ on $K_j$ for $j=1,\ldots,J$, where
		\begin{multline}
		\dfrac{1}{\delta}(\alpha_j^n - \alpha_j^{n-1}) + \dfrac{1}{a_j} \sum_{e_{ji} \in \mathcal{E}(j)} \ell_{ji} \mathcal{F}_{ji}^{n-1} \\
		= 
		(\alpha_j^{n-1} - \athr)^{+}(1 - \alpha_j^{n-1})b_{j}^{n-1} - (\alpha_j^{n} - \athr)^{+}d_{j}^{n-1},
		\label{eqn:disc_vf}
		\end{multline}
		where, $\mathcal{F}_{ji}^{n-1}$ is the upwind flux between the triangles $K_j$ and $K_i$ through the common edge $e_{ji}$ defined by 
		\begin{equation}
		\label{eqn:n_flux}
		\mathcal{F}_{ji}^{n} := (\velc_{ji}^n\cdot{\boldsymbol n}_{ji})^{+}\alpha_j^n - (\velc_{ji}^n\cdot{\boldsymbol n}_{ji})^{-}\alpha_i^n,
		\end{equation}
		$\velc_{ij}^n = \velc_h^n({\boldsymbol m}_{ji})$, $b_j^n = \{\!\!\{(1 + s_1)c_h^n/(1 + s_1c_h^n)\}\!\!\}_{K_j}$, and $d_j^n = \{\!\!\{(s_2 + s_3)c_h^n/(1 + s_4c_h^n)\}\!\!\}_{K_j}$. If $e_{ji} \in \mathscr{B}_e$, then we set $\alpha_{i}^n$ to zero. This choice is justified since $\velc_{ji}^n = {\boldsymbol 0}$, so any choice of $\alpha_i^n$ does not change the value of the flux.
		\item\label{it.dsb} 
		$\Omega_h^n$ is defined through the following process: starting from $\Omega_h^{n-1}$,
		\begin{itemize}
			\item[(1)] add all triangles $K_j\not\subset\Omega_h^{n-1}$ that have an edge on $\partial \Omega_h^{n-1}$ and such that $\alpha_j^n\ge \athr$;
			\item[(2)] remove all triangles $K_j\subset \Omega_h^{n-1}$ that have an edge on $\partial\Omega_h^{n-1}$ and such that $\alpha_j^n<\athr$;
			\item[(3)] Steps (1) and (2) lead to a new domain $U$; repeat (2) with $U$ instead of $\Omega_h^{n-1}$ until all triangles $K_j$ that have an edge on $\partial U$ satisfy $\alpha_j^n\ge \athr$, and define $\Omega_h^n$ as the resulting final set $U$.
		\end{itemize}
		
		\item\label{it.dsc} Set the conforming finite element space of piecewise second degree polynomials from $\Omega_h^n$ to $\mathbb{R}^2 $ with homogeneous tangential component on $\partial \Omega_h^n$ by 
		\begin{align}
		\boldsymbol{W}_{h,0}^n := \left\{ {\boldsymbol \varphi}_h^n \in (\mathscr{C}^0(\overline{\Omega_h^n}))^2 : {\boldsymbol \varphi}_{h\vert K_j}^n\in (\mathbb{P}_2(K_j))^2 \;\forall K_j \subset \Omega_h^n,\, {\boldsymbol \varphi}_{h\vert \partial \Omega_h^n}^n\cdot{\boldsymbol \tau}_{\vert \partial \Omega_h^n} = 0 \right\}. 
		\end{align}
		Set the conforming finite element space of piecewise linear polynomials from $\Omega_h^n$ to $\mathbb{R}$ and its subspace with homogeneous Dirichlet boundary condition on $\partial \Omega_h^n$ by
		\begin{align}
		S_h^n &:= \left\{ v_h^n \in \mathscr{C}^0(\overline{\Omega_h^n}) : v_{h\vert K_j}^n \in \mathbb{P}_1(K_j) \;\forall K_j \subset \Omega_h^n \right\} \text{ and }\\
		S_{h,0}^n &:= \left\{ v_h^n \in S_{h}^n,\, v_{h\vert \partial \Omega_h^n}^n = 0 \right\}.
		\end{align}
		Then,
		\begin{equation}
		\velc_h^n := \left\{ \begin{array}{c l}
		\velcex_h^n & \text{ on } \Omega_h^n, \\
		{\boldsymbol 0} & \text{ on } \Omega_\ell\backslash \overline{\Omega_h^n}
		\end{array}
		\right. \text{ and }
		p_h^n := \left\{ \begin{array}{c l}
		\widetilde{p}_h^n & \text{ on } \Omega_h^n, \\
		0& \text{ on } \Omega_\ell\backslash \overline{\Omega_h^n},
		\end{array}
		\right.
		\end{equation}
		where $(\velcex_h^n,\widetilde{p}_h^n) \in {\boldsymbol W}_{h,0}^n \times S_{h,0}^n$ satisfies, for all $\varphi_h^n \in {\boldsymbol W}_{h,0}^n$ and $v \in S_{h,0}^n$,
		\begin{align}
		\label{eqn:pvsys_dis_1}
		a_{1,h}^n(\velcex_h^n,{\boldsymbol \varphi}_h^n) - a_{3,h}^n(\widetilde{p}_h^n,{\boldsymbol \varphi}_h^n) &=    \mathcal{L}_h^n({\boldsymbol \varphi}_h^n), \\
		\label{eqn:pvsys_dis_2}
		a_{2,h}^n(\widetilde{p}_h^n,v_h^n) + a_{3,h}^n(v_h^n,\velcex_h^n) &= 0,
		\end{align}
		with $a_{1,h}^n : {\boldsymbol W}_{h,0}^n \times {\boldsymbol W}_{h,0}^n \rightarrow \mathbb{R},\,a_{2,h}^n : S_{h,0}^n \times {\boldsymbol W}_{h,0}^n \rightarrow \mathbb{R},\, a_{3,h}^n : S_{h,0}^n \times S_{h,0}^n \rightarrow \mathbb{R}$ and $\mathcal{L}_{h}^n : {\boldsymbol W}_{h,0}^n \rightarrow \mathbb{R}$ are defined by
		\begin{align}
		\label{eqn:a_1hn}
		a_{1,h}^n({\boldsymbol u},{\boldsymbol v}) &= \int_
		{\Omega_h^n} \alpha_h^n\left(2\mu \nabla_s{\boldsymbol u}:\nabla_s{\boldsymbol v} + \lambda \mathrm{div}({\boldsymbol u})\mathrm{div}({\boldsymbol v})\right)\vx, \\
		\label{eqn:a_3hn}
		a_{2,h}^n(p,z) &= \int_{\Omega_h^n} \dfrac{1-\alpha_h^n}{k\alpha_h^n} \nabla p \cdot \nabla z\,\mathrm{d}{\boldsymbol x},  \\
		\label{eqn:a_2hn}
		a_{3,h}^n(z,{\boldsymbol w}) &= \int_{\Omega_h^n}  z\,\mathrm{div}({\boldsymbol w})\,\vx, \text{ and } \\
		\label{eqn:ll_hn}
		\mathcal{L}_{h}^n({\boldsymbol v}) &= \int_{\Omega_h^n} \mathscr{H}(\alpha_h^n)\mathrm{div}({\boldsymbol v})\,\vx.
		\end{align}
		\item\label{it.dsd}  Define the finite dimensional vector space of piecewise constant functions
		\begin{align}
		S_{h,ML} := \left\{ w_h : w_h = \sum_{j=1}^M w_j {\boldsymbol \chi}_{\widetilde{K}_j},\,w_j \in \mathbb{R}, 1 \le j \le M \right\},
		\label{eqn:dtspace_ox}
		\end{align}
		where, $\widetilde{K}_j$ is the convex polygon at the vertex ${\boldsymbol v}_j$ defined by 
		\begin{equation}
		\widetilde{K}_j = \left\{ {\boldsymbol x} : {\boldsymbol x} = \sum_{\{i \,:\, {\boldsymbol v}_j \in \overline{K_i}\}} \lambda_i {\boldsymbol z}_i,\, 0 \le \lambda_i \le 1,\,\sum_{i}\lambda_i = 1 \right\}.
		\end{equation}
		The mass lumping operator $\Pi_h : \mathscr{C}^0(\overline{\Omega_\ell}) \rightarrow S_{h,ML}$ is defined by $\Pi_h w = \sum_{j=1}^M w({\boldsymbol v}_j){\boldsymbol \chi}_{\widetilde{K}_j}$. Then, 
		\begin{eqnarray}
		c_{h}^n := \left\{ \begin{array}{c l}
		\widetilde{c}_h^n & \text{ on } \Omega_h^n,\\
		1 & \text{ on } \Omega_\ell \backslash \overline{\Omega_h^n},
		\end{array}
		\right.
		\end{eqnarray}
		where $\widetilde{c}_h^n \in S_{h}^n$ satisfies $\widetilde{c}_{h\vert \partial \Omega_h^n}^n = 1$ and, with $\Pi_h \widetilde{c}_h^n := (\Pi_h c_h^n)_{\vert \Omega_h^n}$,
		\begin{multline}
		\int_{\Omega_{h}^n} \left( \Pi_h\widetilde{c}_h^n - \Pi_h c_h^{n-1} \right) \Pi_hv_h^n\;\vx + \delta\int_{\Omega_{h}^n} \eta\nabla \widetilde{c}_{h}^n\cdot \nabla v_h^n\;\vx  \\
		= -\delta\int_{\Omega_h^n}\dfrac{Q \alpha_{h}^n}{1+\widehat{Q}_1\Pi_h c_h^{n-1}} \Pi_h \widetilde{c}_h^n\Pi_hv_h^n\vx\quad \forall v_h^n \in S_{h,0}^n.
		\label{eqn:c_tn}
		\end{multline}
	\end{enumerate}
\end{definition}

\begin{remark}[Scheme for the NLM model]
	Step~\ref{it.dsd} needs to be modified in the case of numerical experiments for the NLM. In particular, we replace $\Omega_{h}^n$ in~\eqref{eqn:c_tn} by $\Omega_\ell=(-\ell,\ell)^2$ and $\widetilde{c}_h^n$ by $c_h^n$ to incorporate the evolution of the nutrient in the entire domain $\mathscr{D}_T$. Now, the boundary conditions are imposed on $\partial \mathscr{D}_T$, and represent the supply of nutrient through blood vessels at the boundary of the domain.
\end{remark}

\begin{remark}[Determining $\Omega_h^n$]
	The step~\ref{it.dsb} determines the tumour domain. The volume fraction of tumour cells outside $\Omega_{h}^n$ is numerically close to zero while it is significant on the boundary of $\Omega_{h}^n$. That is the boundary of $\Omega_{h}^n$ is the interface beyond which the cell volume fraction reduces to a numerically small value. However, we allow the volume fraction of the tumour cells to become close to zero in some internal parts of $\Omega_{h}^n$, and still remain as integral parts of $\Omega_{h}^n$. 
	
	To ensure the stability of the finite volume discretisation of \eqref{eqn:v_frac}, the time stepping used in simulations must be chosen so that the CFL condition holds; as a consequence, the tumour can only grow by one layer of triangles at each time step, which justifies the choice in Step (1) in~\ref{it.dsb}.
	Additionally, in our simulations we noticed that multiple iterations of Step (2) in~\ref{it.dsb} are not required: after one iteration only, all the resulting boundary triangles have a tumour volume fraction larger than $\athr$.
	\label{rem:det_omeghn}
\end{remark}

\begin{remark}[3D setting]
	The discrete schemes presented here in 2D for the NUM and NLM models extend in a straightforward way to three-dimensional models, since they are based on methods (finite volume, finite elements) that can be applied to 2D and 3D equations, and have the same presentation in both dimensions.
\end{remark}

\begin{definition}[Discrete solution for the NUM model]
	The 4-tuple of functions  $(\alpha_{h,\delta},\velc_{h,\delta},p_{h,\delta},c_{h,\delta})$ defined by  $(\alpha_{h,\delta},\velc_{h,\delta},p_{h,\delta},c_{h,\delta}) := (\alpha_h^n,{\boldsymbol u}_h^n,p_h^n,c_{h}^n)$ on $\mathcal{T}_n$ for $0 \leq n \le N-1$, where the finite sequence $(\alpha_h^n,{\boldsymbol u}_h^n,p_h^n,c_{h}^n)_{\{0 \leq n \leq N-1\}}$ is obtained from Definition~\ref{defn:discrete_scheme} is said to be the discrete solution of the NUM model~\eqref{sys:model_common}--\eqref{eqn:in_cond_NUM_c} with respect to the time discretisation $(\mathcal{T}_n)_{n=0,\ldots,N-1}$ and the triangulation $\mathscr{T}$.
\end{definition}

A few aspects of the numerical scheme need to be discussed briefly. For more details, the reader may refer to~\cite{Remesan2019}.

\subsubsection{Threshold value}
The threshold value $\athr \in (0,1)$ plays an important role in obtaining accurate numerical solutions. The finite volume method used in~\ref{it.dsa} introduces significant numerical diffusion while computing $\alpha_h^n$, due to upwinding of the fluxes. If we define the discrete domain $\Omega_h^n$ as the union of all triangle $K_j$ with $\alpha_{h\vert K_j}^n > 0$, the domain $\Omega_h^n$ might be significantly larger than the exact domain $\Omega(t_n)$. Since the computation of ${\boldsymbol u}_h^n, p_h^n$ and $c_h^n$ depends crucially on $\Omega_h^n$, the error in $\Omega_h^n$ affects the accuracy of these functions as well. Further, $\alpha_{h}^{n+1}$ depends on ${\boldsymbol u}_h^n, p_h^n$, and $c_h^n$. So the error propagates over time steps, finally reducing the quality of numerical solutions significantly. To avoid this, we compare $\alpha_h^n$ with a small positive number, $\athr$. The tumour boundary $\partial \Omega_h^n$ is the polygonal curve constituted by the edges of triangles in $\mathscr{T}$ such that $\alpha_j^n \ge \athr$ in the boundary triangles $K_j$ internal to $\Omega_h^n$, and $\alpha_j^n < \athr$ in every triangle external to $\partial \Omega_h^n$.  However, the triangles in $\Omega_\ell\backslash \Omega_h^n$ have volume fraction in the range $(0,\athr)$. This residual volume fraction causes a spurious growth from the term $\alpha f(\alpha,c)$ in the right hand side of~\eqref{eqn:v_frac} and this effect is eliminated by modifying $\alpha f(\alpha,c)$ to $(\alpha - \athr)^{+}f(\alpha,c)$ in the right hand side of~\eqref{eqn:disc_vf}.
\subsubsection{Numerical methods}
The volume fraction equation~\eqref{eqn:v_frac} is a hyperbolic conservation law. Therefore, we use a finite volume scheme with piecewise constant solutions on each triangle $K_j$. The piecewise constant solutions $\alpha_h^n$ have the added advantage of easy computation of the integrals in~\eqref{eqn:a_1hn}--\eqref{eqn:ll_hn}. The Lagrange $\mathbb{P}_2-\mathbb{P}_1$ Taylor-Hood method ensures the stability of the solutions $(\velc_h^n,p_h^n)$ obtained from~\ref{it.dsc}; note that when $\alpha_h^n$ approaches unity,~\eqref{eqn:velocity} and \eqref{eqn:pressure} become a Stokes system. Moreover,  taking the values of $\velc_h^n$ at the edge mid points facilitates a straight forward computation of the numerical flux defined by~\eqref{eqn:n_flux}. The backward in time Euler method ensures the stability of the numerical solutions $c_h^n$ obtained from~\ref{it.dsd}. The mass lumping $\mathbb{P}_1$ finite element method and the Delaunay based triangulation are used to obtain the positivity and boundedness (by unity) of $c_h^n$~\cite{Thomee200811}.

\section{Numerical results}
\label{sec:n_results}
The tests conducted in this section are categorised into two sets, Set-NUM and Set-NLM, corresponding to NUM and NLM models. The values of the parameters that remain the same in Set-NUM and Set-NLM are tabulated in Table~\ref{tab:tabvalues}.
\begin{table}[h!] 
	\centering
	\begin{tabular}{|c|c||c|c|}
		\hline
		\textbf{Parameter} & \textbf{Value} & \textbf{Parameter} & \textbf{Value} \\ \hline \hline
		$\delta$          & 0.1            & $\mu$             & 1              \\ \hline
		$s_1, s_4$             & 10              & $\lambda$         & -2/3           \\ \hline
		$s_2, s_3$        & 0.5            & $\athr$           & 0.01           \\ \hline
		$\widehat{Q}$             & 0             & $\alpha^{\ast}$   & 0.8            \\ \hline
	\end{tabular}
	\caption{Dimensionless parameters used in the numerical experiments for Set-NUM and Set-NLM.}
	\label{tab:tabvalues}
\end{table}
The numerical values in Table~\ref{tab:tabvalues} are adapted from~\cite{breward_2002} in which a similar model in one spatial dimension is considered. Values of the parameters $Q$ and $\eta$ depend on specific cases and are provided in the later experiments. In all sets of experiments, the initial volume fraction is given by $\alpha(0,{\boldsymbol x}) = 0.8$ when ${\boldsymbol x} \in \Omega_{h}^0$ and $\alpha(0,{\boldsymbol x}) = 0$ when ${\boldsymbol x} \not\in \Omega_{h}^0$ and the time step $\delta$ is set as $0.1$ (see Remark~\ref{rem:det_omeghn}). In all simulations, the images are represented in a large enough box that contains tumour domain depicted therein well in its interior. The MATLAB code for NUM simulations can be found in the URL \href{https://github.com/gopikrishnancr/2D_tumour_growth_FEM_FVM}{\texttt{https://github.com/gopikrishnancr/2D\_tumour\_growth\_FEM\_FVM.}}

\subsection{Setting for NUM simulations (Set-NUM)}
\label{subsec:num}
We simulate the evolution of tumours starting with initial domains of the shapes as in Figures~\ref{fig:rand_c}--\ref{fig:rand_d}. In all the simulations, the dimension of the square $\Omega_\ell$ is $(-5,5)^2$. The final time is set at $T = 20$. The triangulations are as in Figures \ref{fig:tria_circle}--\ref{fig:tria_donut}.

In the simulations corresponding to Figure~\ref{fig:tumour_nsm}, we set $Q = 0.5$ and $\eta = 1$.

In Figure~\ref{fig:tumour_nsm}, we show the state of the variables: volume fraction, nutrient concentration, negative pressure, and the momentum -- defined as the product of the volume fraction and the cell velocity vector field --  at the time $T = 20$ from the top row to the bottom row, respectively. The columns from the left to the right depict the evolution of a tumour initially seeded with cells in the shape of a circle, bullet and semi-annulus, respectively. 

\subsection{Setting for NLM simulations (Set-NLM)}
\label{subsec:nlm}
In Set-NLM tests, we study the evolution of a tumour that was circular initially.  The dimension of the square $\Omega_\ell$ is $(-5,5)^2$ and the final time $T = 30$. We set $Q = 0.01$ and $\eta = 2$. It is worthwhile to notice that we keep $\eta$ to be the same inside and outside the tumour region for simplicity. However, in a more generic situation, $\eta$ will vary between the tumour region and external medium. In this set of experiments, volume fraction and nutrient concentration are solved in the entire spatial domain $\Omega_\ell$, while cell velocity and pressure are solved in $\Omega_h^n$ at each $t_n$.
\begin{figure}
\resizebox{0.85\textwidth}{!}{
	\centering
	\begin{minipage}{0.2cm}
		\centering
		\rotatebox{90}{\textcolor{red}{\quad volume fraction}}
	\end{minipage}
\begin{minipage}{14.5cm}
	\begin{subfigure}[b]{0.31\textwidth}
	\caption*{\quad\quad circular shaped}
	\includegraphics[scale=1]{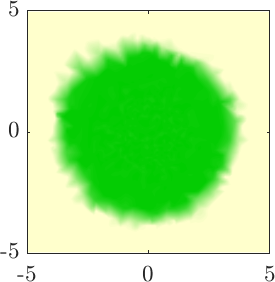}
	\caption{}
	\label{fig:vf_circle}
\end{subfigure}
~ 
\begin{subfigure}[b]{0.33\textwidth}
	\caption*{\quad\quad bullet shaped }
	\includegraphics[scale=1]{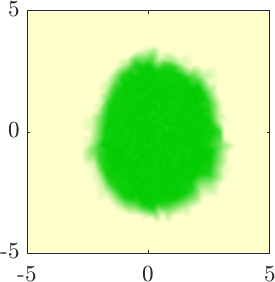}
	\caption{}
	\label{fig:vf_bullet}
\end{subfigure}
\begin{subfigure}[b]{0.32\textwidth}
	\caption*{\quad\quad semi-annular shaped}
	\includegraphics[scale=1]{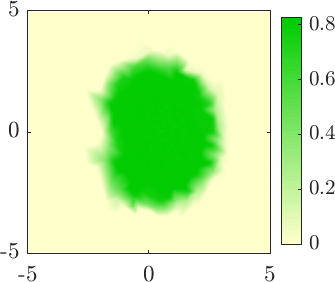}
	\caption{}
	\label{fig:vf_donut}
\end{subfigure}
\end{minipage}
}
\newline
\resizebox{0.85\textwidth}{!}{
	\begin{minipage}{0.2cm}
			\centering
	\rotatebox{90}{\textcolor{red}{\quad\quad nutrient concentration}}
\end{minipage}
\begin{minipage}{14.5cm}
\begin{subfigure}[b]{0.31\textwidth}
	\includegraphics[scale=1]{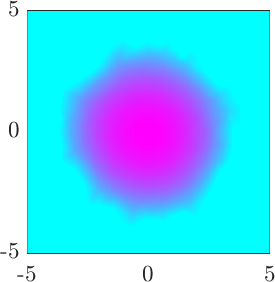}
	\caption{}
	\label{fig:ot_circle}
\end{subfigure}
~ 
\begin{subfigure}[b]{0.33\textwidth}
	\includegraphics[scale=1]{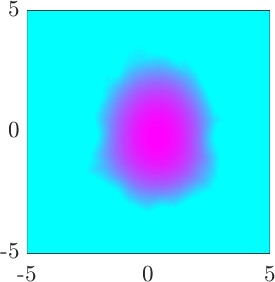}
	\caption{}
	\label{fig:ot_bullet}
\end{subfigure}
\begin{subfigure}[b]{0.32\textwidth}
	\includegraphics[scale=1]{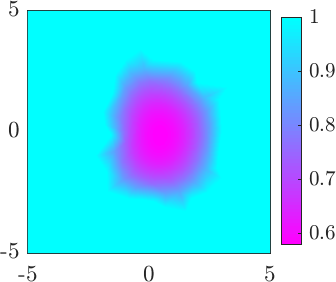}
	\caption{}
	\label{fig:ot_donut}
\end{subfigure}
\end{minipage}
}
\newline
\resizebox{0.85\textwidth}{!}{
	\begin{minipage}{0.2cm}
			\centering
	\rotatebox{90}{\textcolor{red}{\quad\quad negative pressure}}
\end{minipage}
\begin{minipage}{14.5cm}
\begin{subfigure}[b]{0.31\textwidth}
	\includegraphics[scale=1]{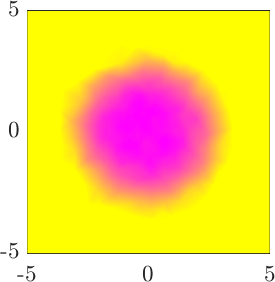}
	\caption{}
	\label{fig:pres_circle}
\end{subfigure}
~ 
\begin{subfigure}[b]{0.33\textwidth}
	\includegraphics[scale=1]{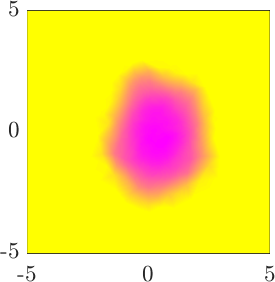}
	\caption{}
	\label{fig:pres_bullet}
\end{subfigure}
\begin{subfigure}[b]{0.32\textwidth}
	\includegraphics[scale=1]{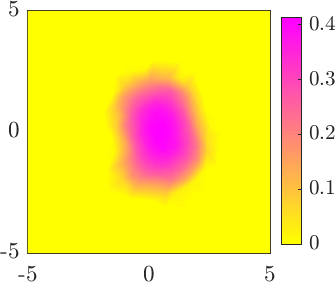}
	\caption{}
	\label{fig:pres_donut}
\end{subfigure}
\end{minipage}
}
\newline
\resizebox{0.85\textwidth}{!}{
	\begin{minipage}{0.2cm}
			\centering
	\rotatebox{90}{\textcolor{red}{\quad\quad momentum}}
\end{minipage}
\begin{minipage}{14.5cm}
\begin{subfigure}[b]{0.31\textwidth}
	\includegraphics[scale=1]{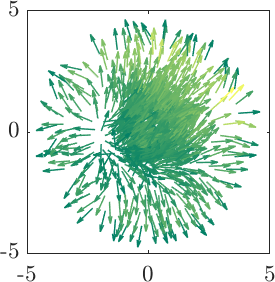}
	\caption{}
	\label{fig:vt_circle}
\end{subfigure}
~ 
\begin{subfigure}[b]{0.33\textwidth}
	\includegraphics[scale=1]{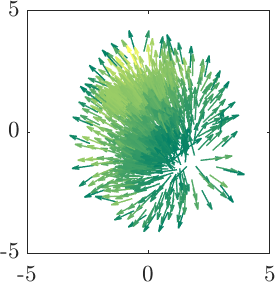}
	\caption{}
	\label{fig:vt_bullet}
\end{subfigure} 
\begin{subfigure}[b]{0.32\textwidth}
	\includegraphics[scale=1]{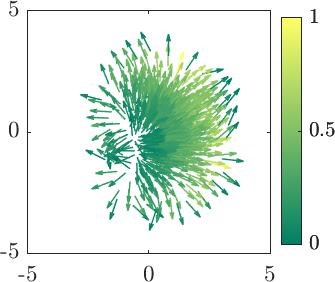}
	\caption{}
	\label{fig:vt_donut}
\end{subfigure}
\end{minipage}
}
\caption{Set-NUM: Rows one to four illustrate the volume fraction, nutrient concentration, negative pressure, and cell momentum at $T = 20$, respectively. The variables in columns one to three correspond to an initial domain, $\Omega(0)$, in the shape of a circle, bullet, and semi-annulus, respectively.}
	\label{fig:tumour_nsm}
\end{figure}

\begin{figure}[h!]
\resizebox{0.85\textwidth}{!}{
	\centering
	\begin{minipage}{0.2cm}
		\centering
		\rotatebox{90}{\textcolor{red}{\quad\quad volume fraction}}
	\end{minipage}
	\begin{minipage}{14.5cm}
		\begin{subfigure}[b]{0.31\textwidth}
			\caption*{\quad\quad $T = 10$}
			\includegraphics[scale=1]{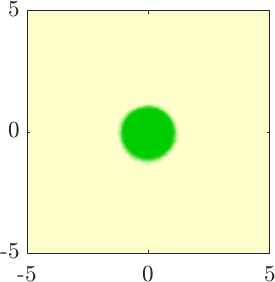}
			\caption{}
			\label{fig:vfrac_nc_10}
		\end{subfigure}
		~ 
		\begin{subfigure}[b]{0.33\textwidth}
			\caption*{\quad\quad $T = 20$}
			\includegraphics[scale=1]{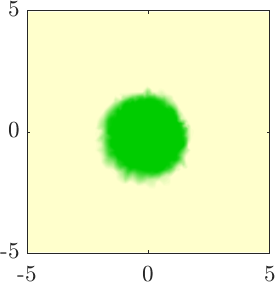}
			\caption{}
			\label{fig:vfrac_nc_20}
		\end{subfigure}
		\begin{subfigure}[b]{0.32\textwidth}
			\caption*{\quad\quad $T = 30$}
			\includegraphics[scale=1]{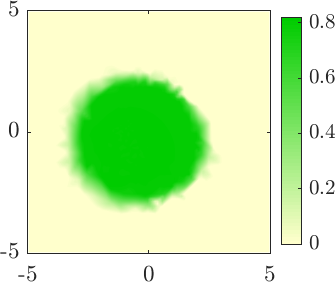}
			\caption{}
			\label{fig:vfrac_nc_30}
		\end{subfigure}
	\end{minipage}
}
	\newline
\resizebox{0.85\textwidth}{!}{
	\begin{minipage}{0.2cm}
		\centering
		\rotatebox{90}{\textcolor{red}{\quad\quad nutrient concentration}}
	\end{minipage}
	\begin{minipage}{14.5cm}
		\begin{subfigure}[b]{0.31\textwidth}
			\includegraphics[scale=1]{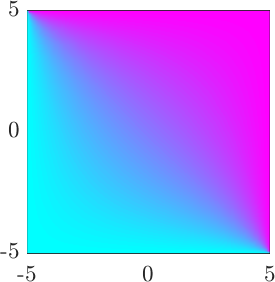}
			\caption{}
			\label{fig:ot_nc_10}
		\end{subfigure}
		~ 
		\begin{subfigure}[b]{0.33\textwidth}
			\includegraphics[scale=1]{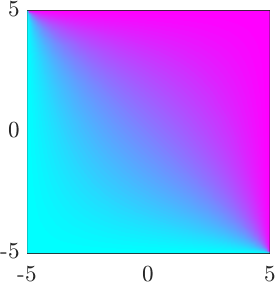}
			\caption{}
			\label{fig:ot_nc_20}
		\end{subfigure}
		\begin{subfigure}[b]{0.32\textwidth}
			\includegraphics[scale=1]{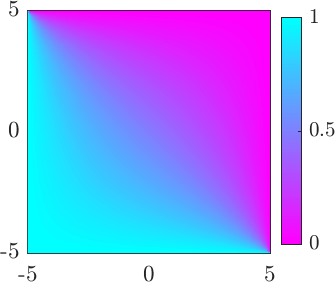}
			\caption{}
			\label{fig:ot_nc_30}
		\end{subfigure}
	\end{minipage}
}
	\newline
\resizebox{0.85\textwidth}{!}{
	\begin{minipage}{0.2cm}
		\centering
		\rotatebox{90}{\textcolor{red}{\quad\quad negative pressure}}
	\end{minipage}
	\begin{minipage}{14.5cm}
		\begin{subfigure}[b]{0.31\textwidth}
			\includegraphics[scale=1]{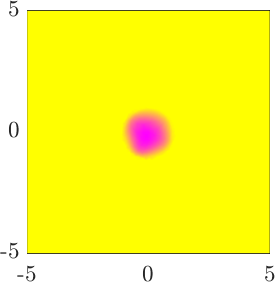}
			\caption{}
			\label{fig:pres_nc_10}
		\end{subfigure}
		~ 
		\begin{subfigure}[b]{0.33\textwidth}
			\includegraphics[scale=1]{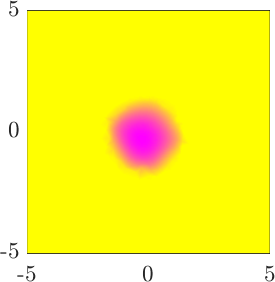}
			\caption{}
			\label{fig:pres_nc_20}
		\end{subfigure}
		\begin{subfigure}[b]{0.32\textwidth}
			\includegraphics[scale=1]{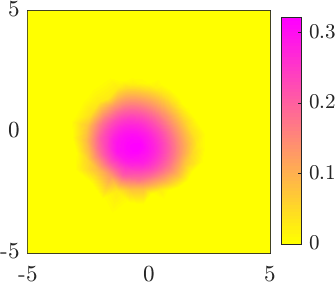}
			\caption{}
			\label{fig:pres_nc_30}
		\end{subfigure}
	\end{minipage}
}
	\newline
\resizebox{0.85\textwidth}{!}{
	\begin{minipage}{0.2cm}
		\centering
		\rotatebox{90}{\textcolor{red}{\quad\quad momentum}}
	\end{minipage}
	\begin{minipage}{14.5cm}
		\begin{subfigure}[b]{0.31\textwidth}
			\includegraphics[scale=1]{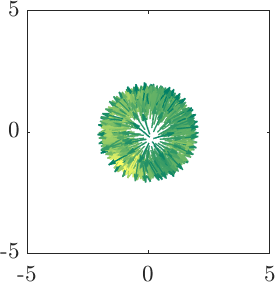}
			\caption{}
			\label{fig:vt_nc_10}
		\end{subfigure}
		~ 
		\begin{subfigure}[b]{0.33\textwidth}
			\includegraphics[scale=1]{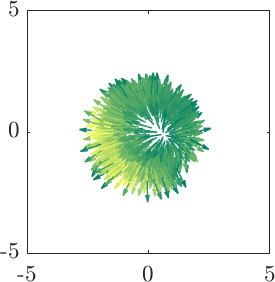}
			\caption{}
			\label{fig:vt_nc_20}
		\end{subfigure}
		\begin{subfigure}[b]{0.32\textwidth}
			\includegraphics[scale=1]{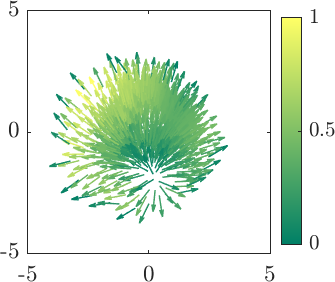}
			\caption{}
			\label{fig:vt_nc_30}
		\end{subfigure}
	\end{minipage}
}
	\caption{Set-NLM: Evolution of a tumour with a circular initial geometry. Rows one to four illustrate the variables volume fraction, nutrient concentration, negative pressure and cell momentum, respectively and columns one to three  illustrate state of the variables at times $T = 10,\,20$, and $30$, respectively.  }
	\label{fig:tumour_nlm}
\end{figure}

We set the boundary values of the nutrient concentration $c$ as follows: $c = 0$ on $y = 5$ and $x = 5$, and $c = 1$ on $y = -5$ and $x = -5$. The initial nutrient concentration is given by $c_0(0,{\boldsymbol x}) = 0$.

In Figure~\ref{fig:tumour_nlm}, the columns from the left to the right show the state of the variables at time $T=10,\,20$ and $30$, respectively. The rows from the top to the bottom represent, volume fraction, nutrient concentration, negative pressure, and  cell momentum vector field, respectively.

\subsection{Discussion on numerical results}

\subsubsection{Set-NUM, effect of initial tumour shape}

Numerical experiments in subsections~\ref{subsec:num} and~\ref{subsec:nlm} substantiate the beneficial aspects of the discrete scheme (Definition~\ref{defn:discrete_scheme}) developed in Section~\ref{sec:n_scheme}. This scheme is able to simulate tumour geometries with arbitrary shapes (see Figure~\ref{fig:tumour_nsm}). Firstly, we considered a tumour with unit circular shaped initial geometry in Set-NUM and in this case, the initial volume fraction is uniform and symmetric about the origin. The nutrient concentration at the boundary of the tumour is unity throughout the simulation. Therefore, the tumour does not experience any unbalanced force that disturbs its symmetry and we expect radially symmetric growth. The numerical results in Figure~\ref{fig:vf_circle},~\ref{fig:ot_circle},~\ref{fig:pres_circle}, and~\ref{fig:vt_circle} confirm this argument. It is clear that the tumour is growing with radial symmetry as the volume fraction distribution in Figure~\ref{fig:vf_circle} indicates. However, such symmetry cannot be expected for the cases with asymmetric initial geometries. This is corroborated by the numerical experiments with the bullet shaped and semi-annular shaped initial geometry. In the case of a bullet shaped initial geometry, since much of the volume fraction is distributed along the $y$-axis rather than along the $x$-axis, a natural expectation is that the vertical dimension of the tumour is longer than the horizontal dimension, which the numerical simulations show. The asymmetric growth in the case of the tumour with semi-annular initial geometry arises in a different way. The convex side of the  tumour with apex at $x=1$ grows normally outwards, while the non-convex side grows into the semi-annular gap between $y = -0.5$ and $y = 0.5$, and $x = 0
$ and $x = 0.5$ (see Figure~\ref{fig:vf_donut} and~\ref{fig:vt_donut}). 
\begin{figure}[h!]
	\centering
	\begin{minipage}{0.2cm}
		\centering
		\rotatebox{90}{\textcolor{red}{\quad volume fraction}}
	\end{minipage}
	\begin{minipage}{14.5cm}
		\begin{subfigure}[b]{0.32\textwidth}
			\caption*{$T = 0$}
			\includegraphics[scale=0.9]{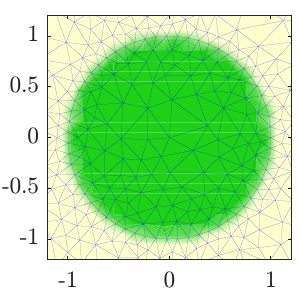}
			\caption{}
			\label{fig:vf_d_1}
		\end{subfigure}
		~ 
		\begin{subfigure}[b]{0.33\textwidth}
			\caption*{$T = 4$}
			\includegraphics[scale=0.9]{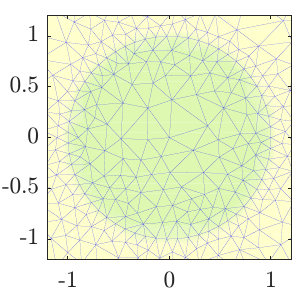}
			\caption{}
			\label{fig:vf_d_3}
		\end{subfigure}
		\begin{subfigure}[b]{0.32\textwidth}
			\caption*{$T = 8$}
			\includegraphics[scale=0.9]{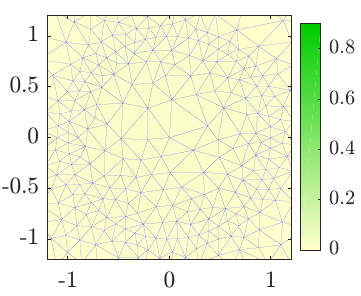}
			\caption{}
			\label{fig:vf_d_4}
		\end{subfigure}
	\end{minipage}
	\newline
	\begin{minipage}{0.2cm}
		\centering
		\rotatebox{90}{\textcolor{red}{\quad\quad nutrient concentration}}
	\end{minipage}
	\begin{minipage}{14.5cm}
		\begin{subfigure}[b]{0.32\textwidth}
			\includegraphics[scale=0.9]{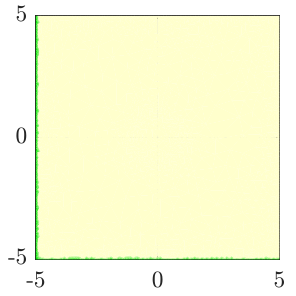}
			\caption{}
			\label{fig:ot_d_1}
		\end{subfigure}
		~ 
		\begin{subfigure}[b]{0.33\textwidth}
			\includegraphics[scale=0.9]{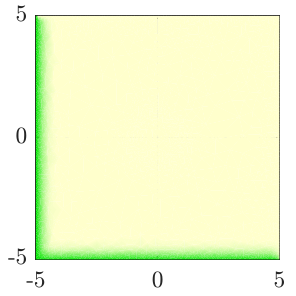}
			\caption{}
			\label{fig:ot_d_2}
		\end{subfigure}
		\begin{subfigure}[b]{0.32\textwidth}
			\includegraphics[scale=0.9]{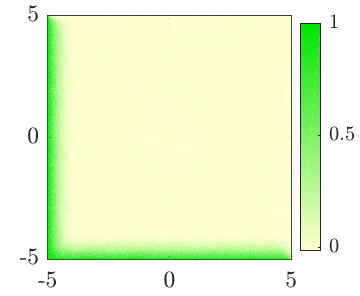}
			\caption{}
			\label{fig:ot_d_4}
		\end{subfigure}
	\end{minipage}
	\caption{The evolution of volume fraction and nutrient concentration with $\eta = 0.1$ and $Q = 0.01$ in NLM. Observe that the cells undergo necrosis before the nutrient can reach the tumour.}
	\label{fig:cnlm_var}
\end{figure}
As the tumour proliferates and expands, it becomes more difficult for the nutrient to diffuse into the interior region of tumour. The nutrient concentration distribution in Figures~\ref{fig:ot_circle},~\ref{fig:ot_bullet}, and~\ref{fig:ot_donut} show the decreasing value of concentration towards the interior of the tumour irrespective of the initial geometry. The depletion of nutrient level inside the tumour causes cell necrosis and as result, the extra-cellular fluid tends to fill the space generated. This is clearly reflected by the fact that the fluid pressure is more negative (see Figures~\ref{fig:pres_circle},~\ref{fig:pres_bullet}, and~\ref{fig:pres_donut}) towards the interior of the tumour and hence the fluid flow direction is from outside to inside. The cell velocity vector field shows the direction in which the cells are moving. When the initial geometry of the tumour is circular, the cells move in a radial direction with roughly equal magnitude (see Figure~\ref{fig:vt_circle}). However, in the case of asymmetric initial geometries the cell velocity vector field is also asymmetric (see Figures~\ref{fig:vt_bullet} and~\ref{fig:vt_donut}).

\subsubsection{Set-NLM, attraction towards oxygen source}

The simulations for the Set-NLM test give interesting results. It can be observed from the volume fraction at times $10,\,20$, and $30$ that the tumour grows towards the south-west corner. This affinity can be explained using the differential supply of the nutrient. The only source of the nutrient for the tumour comes from the left and bottom boundaries of the square $\Omega_\ell$. As Figures~\ref{fig:ot_nc_10},~\ref{fig:ot_nc_20} and~\ref{fig:ot_nc_30} show, the nutrient diffuses from the left and the bottom boundaries towards the tumour. The tumour starts to grow when this diffused nutrient reaches its vicinity. From Figure~\ref{fig:vfrac_nc_10}, we see that the tumour has not grown, until $T = 10$, the time at which the diffused nutrient just meets the tumour boundary. The tumour starts to grow after this time as observed from Figures~\ref{fig:vfrac_nc_20} and~\ref{fig:vfrac_nc_30}.
\begin{figure}[h!]
	\centering
	\hspace{-1.5cm}
	\begin{subfigure}[b]{0.44\textwidth}
		\includegraphics[scale=0.3]{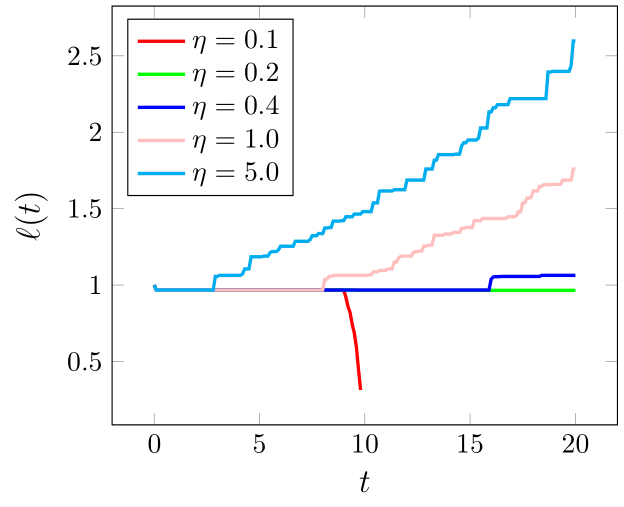}
		\caption{$Q = 0.01$}
		\label{fig:rad_dvt_var}
	\end{subfigure}
	~ 
	\begin{subfigure}[b]{0.44\textwidth}
		\includegraphics[scale=0.3]{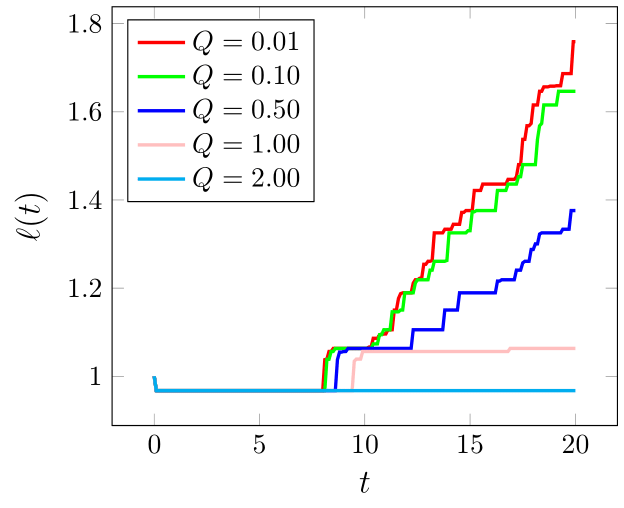}
		\caption{$\eta = 1.0$}
		\label{fig:rad_q_variation}
	\end{subfigure}
	\caption{Variation of the tumour radius, $\ell(t)$ with respect to the time for different values of $\eta$ and $Q$.}
\end{figure}
The numerical values of $Q$ and $\eta$ are crucial in determining the fate of the tumour. In fact, the diffusivity, $\eta$, which controls the ease of nutrient to diffuse into the tumour and the surrounding medium needs to be high enough so that the nutrient is able to reach the tumour vicinity before all the cells die. This situation occurs with numerical values $Q = 0.01$ and $\eta = 0.1$. Here, the low value of $\eta$ prevents the nutrient from reaching the tumour cells in adequate time (see Figures~\ref{fig:ot_d_1}-\ref{fig:ot_d_4}), and as a result the volume fraction of the tumour cells gradually decreases (see Figures~\ref{fig:vf_d_1}-\ref{fig:vf_d_4}). Moreover, this suggests that a higher value of $\eta$ facilitates faster tumour growth owing to faster diffusion of the nutrient, and is supported by the numerical results in Figure~\ref{fig:rad_dvt_var}. Here, the growth (set-NLM) of a tumour with circular initial geometry is studied, and we quantify the tumour size by the tumour radius, $\ell(t)$. Furthermore, we see that the tumour size decreases as $Q$ increases, indicated by Figure~\ref{fig:rad_q_variation}. We note that, broadly speaking, increasing $\eta$ and decreasing $Q$ have a similar effect in producing a larger tumour volume (see Figures~\ref{fig:rad_dvt_var}-\ref{fig:rad_q_variation}). In this way, identifiability issues may be encountered when estimating these two parameters from data that solely measures tumour size over time. However, supplementing with additional data on oxygen perfusion through cancer tissue (see, for example, \cite{grimes2016estimating}), we expect that both parameters could be estimated. 

\subsubsection{Handling topology changes of tumour}

Another notable feature of scheme is that it can simulate tumour growth starting from highly irregular initial geometries with multiple disconnected components. We consider growth of a tumour initially having three disconnected components with irregular boundaries. The irregularity of the initial tumour geometry is shown in Figure~\ref{fig:vfrac_irr_1}. The cell volume fraction at times $T = 0,\,5,\,10,\,20,\,20,\,30,$ and $40$ is plotted in Figure~\ref{fig:irregular_growth}. As the tumour grows the multiple components merge and the tumour continues to grow as a single entity. The numerical scheme is designed in such a way that intrinsic changes in the tumour geometry like the variation in the number of connected components is seamlessly dealt with and the numerical results in Figure~\ref{fig:irregular_growth} support this. It can be observed from Figure~\ref{fig:vfrac_irr_40} that a necrotic core of dead cells has developed owing to the nutrient starvation experienced at the tumour centre due to its large size.
The numerical scheme captures a broad spectrum of features as discussed previously for both symmetric and asymmetric initial geometries. A key factor that helps to achieve this is the implicit recovery of the boundary using the volume fraction. In the scheme it is not required to follow the movement of each point in the boundary, which may result in overlapping of edges and other similar complexities. Defining the interior of the tumour as the union of triangles with active cell volume fraction eliminates these issues, thereby making the numerical scheme versatile for a wide range of scenarios.

\subsubsection{Grid orientation effect}

It should be also noted that orientation of the triangulation has little effect in determining the tumour radius. The numerical experiments in Figure~\ref{fig:rotation_eff} illustrate this. In these simulations, three rotated versions (by angles $0$, $\pi/2$ and $\pi$) of a random triangulation are used for Set-NUM experiments, with an initial tumour in the form of a disk (this ensures that the rotated triangulations remain suitable for this initial shape, as detailed in Section \ref{sec:approximation}). The resulting volume fraction profiles remain mostly circular, with slight effects of the rotations but no change in the final tumour radius.

\begin{figure}
	\resizebox{0.9\textwidth}{!}{
		\begin{minipage}{0.2cm}
			\centering
			\rotatebox{90}{\textcolor{red}{\quad\quad triangulation}}
		\end{minipage}
		\begin{minipage}{14.5cm}
			\begin{subfigure}[b]{0.31\textwidth}
				\includegraphics[scale=1]{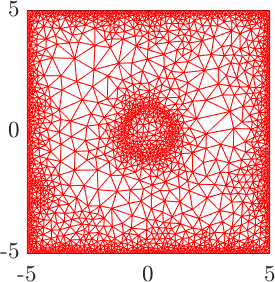}
				\caption{$\vartheta = 0$ rad}
				\label{fig:tria_rot1}
			\end{subfigure}
			~ 
			\begin{subfigure}[b]{0.33\textwidth}
				\includegraphics[scale=1]{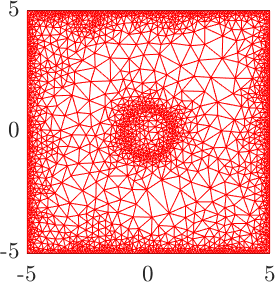}
				\caption{$\vartheta = \pi/2$ rad}
				\label{fig:tria_rot2}
			\end{subfigure}
			\begin{subfigure}[b]{0.32\textwidth}
				\includegraphics[scale=1]{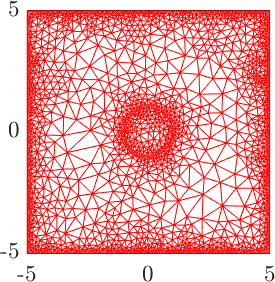}
				\caption{$\vartheta = \pi$ rad}
				\label{fig:tria_rot3}
			\end{subfigure}
		\end{minipage}
	}
	\newline
	\resizebox{0.9\textwidth}{!}{
		\centering
		\begin{minipage}{0.2cm}
			\centering
			\rotatebox{90}{\textcolor{red}{\quad volume fraction}}
		\end{minipage}
		\begin{minipage}{14.5cm}
			\begin{subfigure}[b]{0.31\textwidth}
				\includegraphics[scale=1]{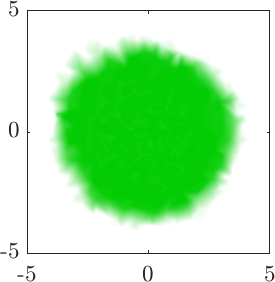}
				\caption{}
				\label{fig:vfrac_rot1}
			\end{subfigure}
			~ 
			\begin{subfigure}[b]{0.33\textwidth}
				\includegraphics[scale=1]{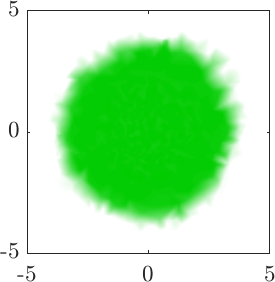}
				\caption{}
				\label{fig:vfrac_rot2}
			\end{subfigure}
			\begin{subfigure}[b]{0.32\textwidth}
				\includegraphics[scale=1]{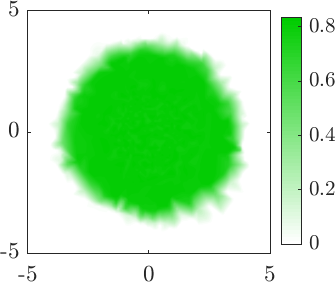}
				\caption{}
				\label{fig:vfrac_rot3}
			\end{subfigure}
		\end{minipage}
	}
	\caption{Effect of orientation of the triangulation on tumour radius. In Figures~\ref{fig:tria_rot1}--\ref{fig:tria_rot3} the triangulation is rotated anticlockwise by the angles $\vartheta = 0,\,\pi/2$ and $\pi$ radians. The corresponding volume fraction profile at $T = 20$ with temporal discretisation factor $\delta  = 0.1$ is provided in Figures~\ref{fig:vfrac_rot1}--\ref{fig:vfrac_rot3}.}
	\label{fig:rotation_eff}
\end{figure}

\subsubsection{Using structured meshes}

The use of a random Delaunay mesh is critical in obtaining good solutions that have minimal mesh-locking. We present the evolution of the volume fraction of a tumour starting with a circular initial geometry, simulated using  structured triangulations with 1024, 4096, and 16,384 triangles in Figures~\ref{fig:v_5_1}--~\ref{fig:v_5_3}, Figures~\ref{fig:v_6_1}--~\ref{fig:v_6_3}, and Figures~\ref{fig:v_7_1}--~\ref{fig:v_7_3}, respectively. The final time is set as $T = 20$, and the time step is $\delta = 0.1$. The initial geometry is circular (see Figure~\ref{fig:v_7_1}). As the triangulations become more refined, it can be observed that the tumour becomes more radially symmetrical. This observation indicates the convergence of the discrete solutions to the radially symmetric solution as the spatial discretisation factor approaches zero. However, the tumour also becomes more squarish as time increases, as shown in Figure~\ref{fig:vf_st_ev}, showing that, for a long time, an extremely fine structure triangulation would have to be used to obtain a reasonable
solution. Such refinement would come at a great cost, whereas the use of a random mesh (with adaptation only to the initial shape) provides suitable solutions with relatively few triangles. 

\begin{figure}
	\centering
	\begin{subfigure}[b]{0.31\textwidth}
		\includegraphics[scale=0.36]{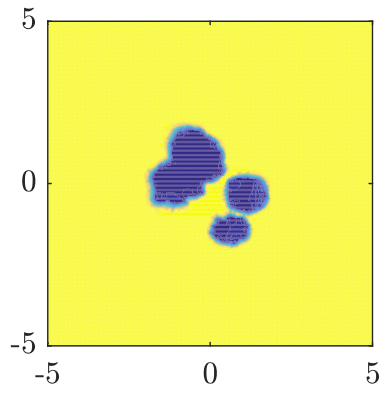}
		\caption{$ T = 0$}
		\label{fig:vfrac_irr_1}
	\end{subfigure}
	~ 
	\begin{subfigure}[b]{0.33\textwidth}
		\includegraphics[scale=0.36]{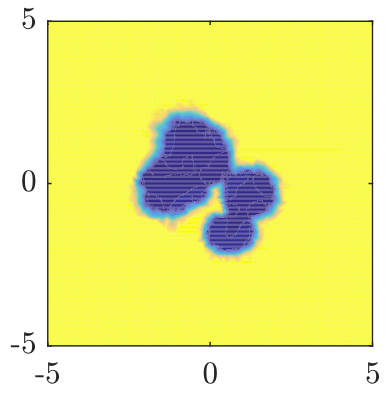}
		\caption{$T = 5$}
		\label{fig:vfrac_irr_5}
	\end{subfigure}
	\begin{subfigure}[b]{0.32\textwidth}
		\includegraphics[scale=0.36]{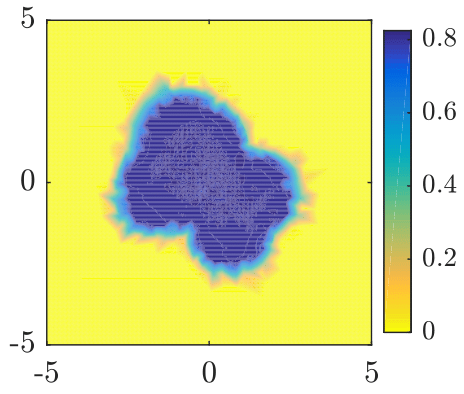}
		\caption{$T = 10$}
		\label{fig:vfrac_irr_10}
	\end{subfigure}
	\begin{subfigure}[b]{0.31\textwidth}
		\includegraphics[scale=0.36]{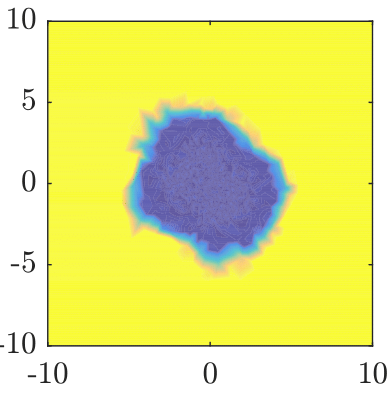}
		\caption{$ T = 20$}
		\label{fig:vfrac_irr_20}
	\end{subfigure}
	~ 
	\begin{subfigure}[b]{0.33\textwidth}
		\includegraphics[scale=0.36]{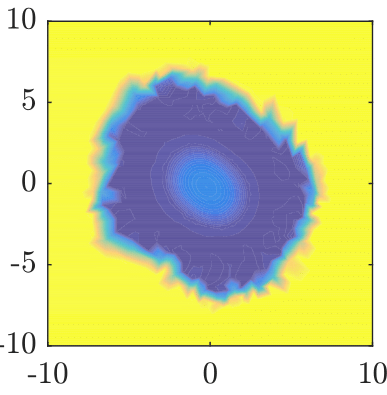}
		\caption{$T = 30$}
		\label{fig:vfrac_irr_30}
	\end{subfigure}
	\begin{subfigure}[b]{0.32\textwidth}
		\includegraphics[scale=0.36]{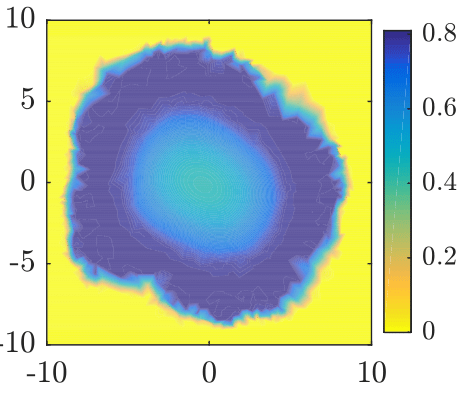}
		\caption{$T = 40$}
		\label{fig:vfrac_irr_40}
	\end{subfigure}
	\caption{Stages of cell volume fraction for tumour growth (NUM) with an irregular initial shape having multiple initial components. }
	\label{fig:irregular_growth}
\end{figure}

\begin{figure}
	\resizebox{0.95\textwidth}{!}{
		\centering
		\begin{subfigure}[b]{0.32\textwidth}
			\caption*{\quad\quad $T = 0$}
			\includegraphics[scale=1]{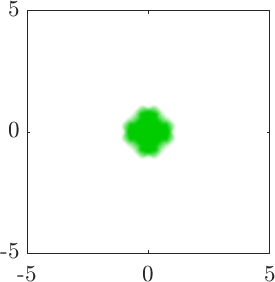}
			\caption{}
			\label{fig:v_5_1}
		\end{subfigure}
		\begin{subfigure}[b]{0.32\textwidth}
			\caption*{\quad\quad $T = 10$}
			\includegraphics[scale=1]{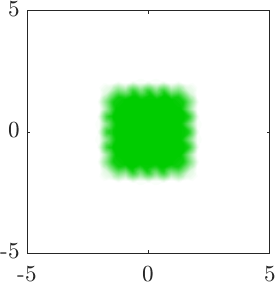}
			\caption{}
			\label{fig:v_5_2}
		\end{subfigure}
		\begin{subfigure}[b]{0.32\textwidth}
			\caption*{\quad\quad $T = 20$}
			\includegraphics[scale=1]{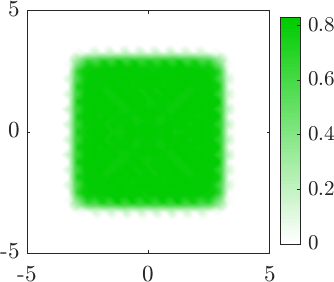}
			\caption{}
			\label{fig:v_5_3}
		\end{subfigure}
	}
	\newline
	\resizebox{0.95\textwidth}{!}{
		\centering
		\begin{subfigure}[b]{0.32\textwidth}
			\includegraphics[scale=1]{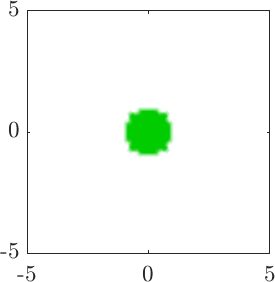}
			\caption{}
			\label{fig:v_6_1}
		\end{subfigure}
		\begin{subfigure}[b]{0.32\textwidth}
			\includegraphics[scale=1]{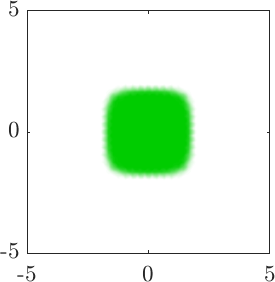}
			\caption{}
			\label{fig:v_6_2}
		\end{subfigure}
		\begin{subfigure}[b]{0.32\textwidth}
			\includegraphics[scale=1]{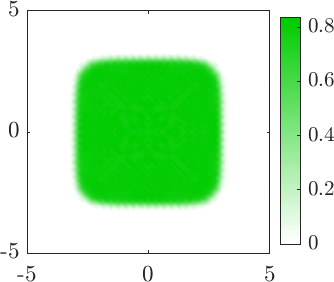}
			\caption{}
			\label{fig:v_6_3}
		\end{subfigure}
	}
	\resizebox{0.95\textwidth}{!}{
		\centering
		\begin{subfigure}[b]{0.32\textwidth}
			\includegraphics[scale=1]{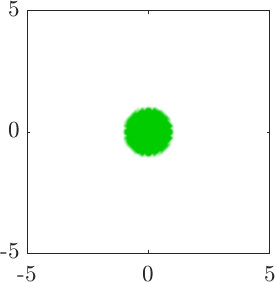}
			\caption{}
			\label{fig:v_7_1}
		\end{subfigure}
		\begin{subfigure}[b]{0.32\textwidth}
			\includegraphics[scale=1]{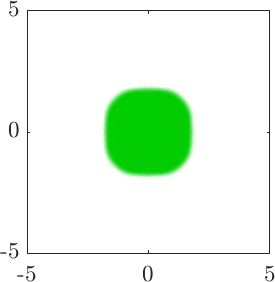}
			\caption{}
			\label{fig:v_7_2}
		\end{subfigure}
		\begin{subfigure}[b]{0.32\textwidth}
			\includegraphics[scale=1]{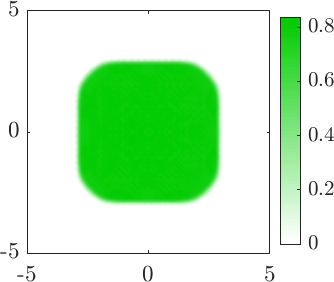}
			\caption{}
			\label{fig:v_7_3}
		\end{subfigure}
	}
	\caption{Evolution of volume fraction with respect to time on structured triangulation. The initial domain is a circle centred at origin with unit radius. Figures~\ref{fig:v_5_1}--~\ref{fig:v_5_3} are computed using the triangulation in Figure~\ref{fig:trian_st_5}, Figures~\ref{fig:v_6_1}--~\ref{fig:v_6_3} are computed using the triangulation in Figure~\ref{fig:trian_st_6}, and Figures~\ref{fig:v_7_1}--~\ref{fig:v_7_3} are computed using the triangulation in Figure~\ref{fig:trian_st_7}. }
	\label{fig:vf_st_ev}
\end{figure}

\subsubsection{Assessment of convergence}

\begin{figure}
	\resizebox{0.9\textwidth}{!}{
		\begin{minipage}{0.2cm}
			\centering
			\rotatebox{90}{\textcolor{red}{\quad\quad triangulation}}
		\end{minipage}
		\begin{minipage}{14.5cm}
			\begin{subfigure}[b]{0.31\textwidth}
				\includegraphics[scale=1]{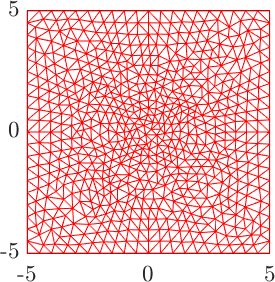}
				\caption{}
				\label{fig:conv_tria_ref1}
			\end{subfigure}
			~ 
			\begin{subfigure}[b]{0.33\textwidth}
				\includegraphics[scale=1]{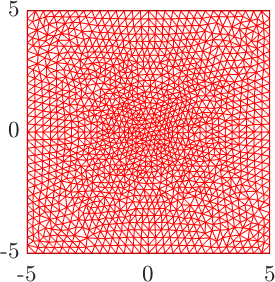}
				\caption{}
				\label{fig:conv_tria_ref2}
			\end{subfigure}
			\begin{subfigure}[b]{0.32\textwidth}
				\includegraphics[scale=1]{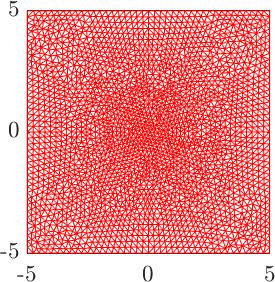}
				\caption{}
				\label{fig:conv_tria_ref3}
			\end{subfigure}
		\end{minipage}
	}
	\newline
	\resizebox{0.9\textwidth}{!}{
		\centering
		\begin{minipage}{0.2cm}
			\centering
			\rotatebox{90}{\textcolor{red}{\quad volume fraction}}
		\end{minipage}
		\begin{minipage}{14.5cm}
			\begin{subfigure}[b]{0.31\textwidth}
				\includegraphics[scale=1]{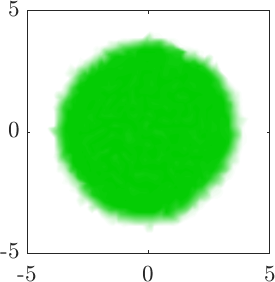}
				\caption{}
				\label{fig:conv_vfrac_ref1}
			\end{subfigure}
			~ 
			\begin{subfigure}[b]{0.33\textwidth}
				\includegraphics[scale=1]{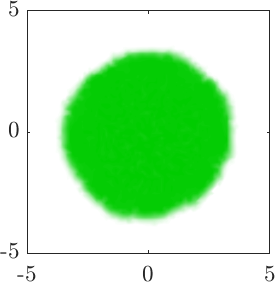}
				\caption{}
				\label{fig:conv_vfrac_ref2}
			\end{subfigure}
			\begin{subfigure}[b]{0.32\textwidth}
				\includegraphics[scale=1]{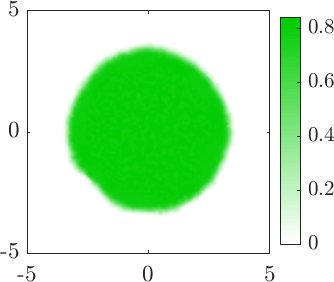}
				\caption{}
				\label{fig:conv_vfrac_ref3}
			\end{subfigure}
		\end{minipage}
	}
	\caption{Convergence of cell volume fraction for Set-NLM with respect to the spatial discretisation factor. The triangulations in Figures~\ref{fig:conv_tria_ref1},~~\ref{fig:conv_tria_ref2}, and~~\ref{fig:conv_tria_ref3} contains 1248, 2084, and 4996 triangles. The volume fractions for Set-NLM are computed at the time $T = 20$.}
	\label{fig:random_conv}
\end{figure}

The convergence of the scheme, as the grid size is reduced, is clearly observable in the case of random triangulations; see Figure~\ref{fig:random_conv}. However, this convergence requires uniform refinements of the mesh, because it depends on both on a Courant--Friedrichs--Lewy (CFL) and on an inverse CFL relation, as demonstrated in~\cite{DNR19}. These conditions take the form
\begin{equation}
\underbrace{\mathrm{C}_{\textsc{icfl}} \le \max_{0 \le n \le N}\sup_{\Omega_h^n} ||\boldsymbol{u}_h^n||_2 \dfrac{\delta}{a_{\mathrm{max}}}}_{\text{inverse CFL condition}} \le \overbrace{\max_{0 \le n \le N}\sup_{\Omega_h^n} ||\boldsymbol{u}_h^n||_2 \dfrac{\delta}{a_{\mathrm{min}}}  \le \mathrm{C}_{\textsc{cfl}}}^{\text{CFL condition}},
\label{eqn:cfl_cond}
\end{equation}
where $\mathrm{C}_{\textsc{icfl}}$ and $\mathrm{C}_{\textsc{cfl}}$ are positive constants, $a_{\mathrm{max}} = \max_{j}a_{j}$, $a_{\mathrm{min}} = \min_{j}a_{j}$, $||{\cdot}||_2$ is the Euclidean norm; recall that $a_j$ is the area of triangle $j$. The temporal discretisation factor $\delta$ is fixed by the smallest triangle through the CFL condition~\eqref{eqn:cfl_cond}. With this $\delta$,  at each time step the diffusion of tumour cells inside the larger triangles would not be sufficient to create a volume fraction $\alpha_h^n$ larger than the threshold, and the tumour would not expand. Such a situation is avoided by the inverse CFL condition~\ref{eqn:cfl_cond}, which ensures a lower bound on numerical diffusion on large triangles also. Nevertheless, the CFL and inverse CFL condition together restrict the possible choices of temporal discretisation factor. Since Ruppert's algorithm performs a fine refinement on  triangles near the boundaries of the initial domain and bounding box, and a relatively coarser refinement on the triangles in between these two boundaries, it leads to a refined triangulation with considerable difference in the sizes of triangles within. Therefore, in the case of very fine refinements, it is better to consider a structured triangulation well adapted to the initial condition, and then perturb the vertices of triangles randomly to remove the mesh-locking effect (see Figures~\ref{fig:conv_tria_ref1}--\ref{fig:conv_tria_ref3}). It can be observed from Figures~\ref{fig:conv_vfrac_ref1}--~\ref{fig:conv_vfrac_ref3} that the volume fractions are indeed converging with mesh refinement.

\begin{figure}[h!]
	\centering
	\begin{subfigure}[b]{0.44\textwidth}
		\includegraphics[scale=1]{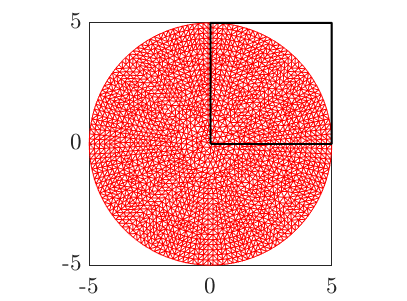}
		\caption{}
		\label{fig:trian_circ}
	\end{subfigure}
	~ 
	\begin{subfigure}[b]{0.44\textwidth}
		\includegraphics[scale=1]{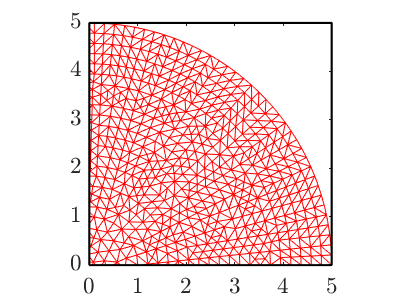}
		\caption{}
		\label{fig:trian_circ_zoom}
	\end{subfigure}
	\caption{Radially aligned triangulation - Figure~\ref{fig:trian_circ} shows the triangulation on the domain $\Omega_{\ell} = (-5,5)^2$ and Figure~\ref{fig:trian_circ_zoom} shows an enlarged view of the first quadrant.}
\end{figure}

\begin{figure}[htp]
	\resizebox{0.97\textwidth}{!}{
		\centering
		\begin{subfigure}[b]{0.33\textwidth}
			\includegraphics[scale=1]{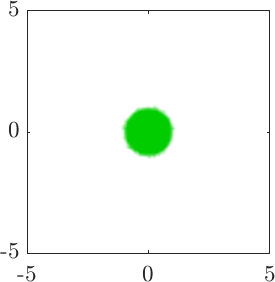}
			\caption{T = 0}
			\label{fig:vfrac_c_1}
		\end{subfigure}
		\begin{subfigure}[b]{0.33\textwidth}
			\includegraphics[scale=1]{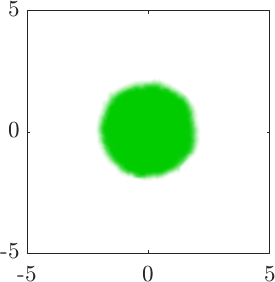}
			\caption{T = 10}
			\label{fig:vfrac_c_2}
		\end{subfigure}
		\begin{subfigure}[b]{0.33\textwidth}
			\includegraphics[scale=1]{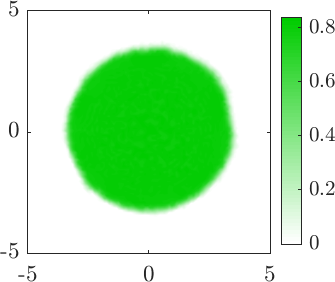}
			\caption{T = 20}
			\label{fig:vfrac_c_3}
		\end{subfigure}
	}
	\caption{Evolution of volume fraction obtained from Set--NUM experiment on the radially aligned triangulation in Figure~\ref{fig:trian_circ}.}
\end{figure}

Mesh locking and loss of radial symmetry in the case of structured triangulations is not due to the procedure using a threshold value to capture the boundary of a tumour. Instead, this is a classical problem associated with the nature of triangulations and finite volume schemes (see subsection~\ref{sec:mesh_lcoking} also). If the symmetry of a discrete solution is known \emph{a priori} and we use a triangulation that respects this symmetry, then the discrete scheme in Definition~\ref{defn:discrete_scheme} preserves this symmetry. For instance, consider the evolution of a tumour with an initial geometry of a unit circle centred at the origin. Since the tumour is expected to evolve with a radial symmetry, we use a triangulation wherein the triangles are aligned with concentric circles centred at the origin (see Figure~\ref{fig:trian_circ}). In this case, it can be observed from Figures~\ref{fig:vfrac_c_1}--\ref{fig:vfrac_c_3} that the discrete volume fraction remains radially symmetrical. However, this method cannot be used in the case of initial geometries like the bullet or semi--annular shape since the symmetry properties of discrete solutions are not known \emph{a priori}. In such cases, the most economically viable choice is to resort to a random triangulation. 

\subsubsection{Influence of threshold value}

\begin{figure}[htp]
	\centering
	\begin{subfigure}[t]{0.48\textwidth}
		\centering
		\includegraphics[scale=0.9]{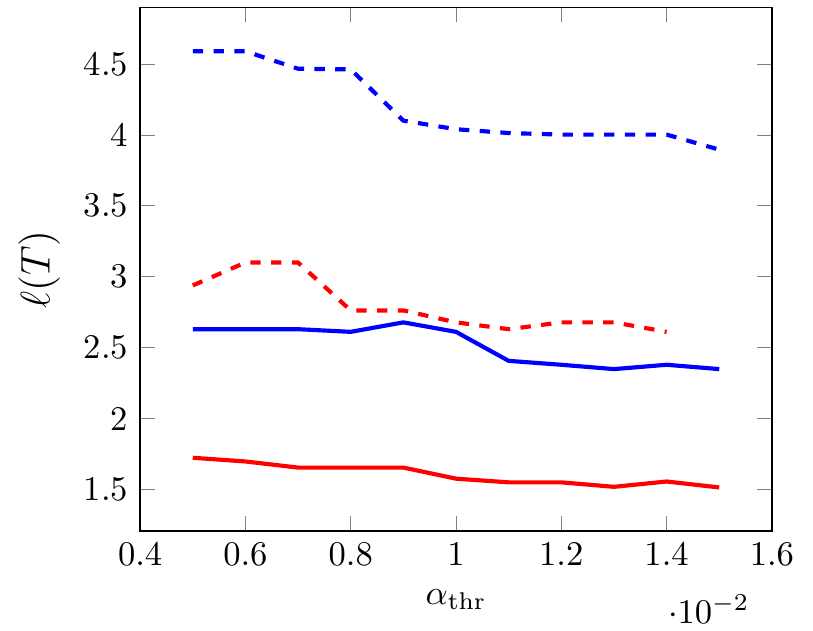}
		\caption{Set--NUM: $Q = 5$ -- solid lines, $Q = 0.5$ -- dashed lines, $\eta = 1$ -- blue lines, and $\eta = 0.1$ -- red lines.}
	\end{subfigure}
	\hspace{0.4cm}
	\begin{subfigure}[t]{0.48\textwidth}
		\centering
		\includegraphics[scale=0.9]{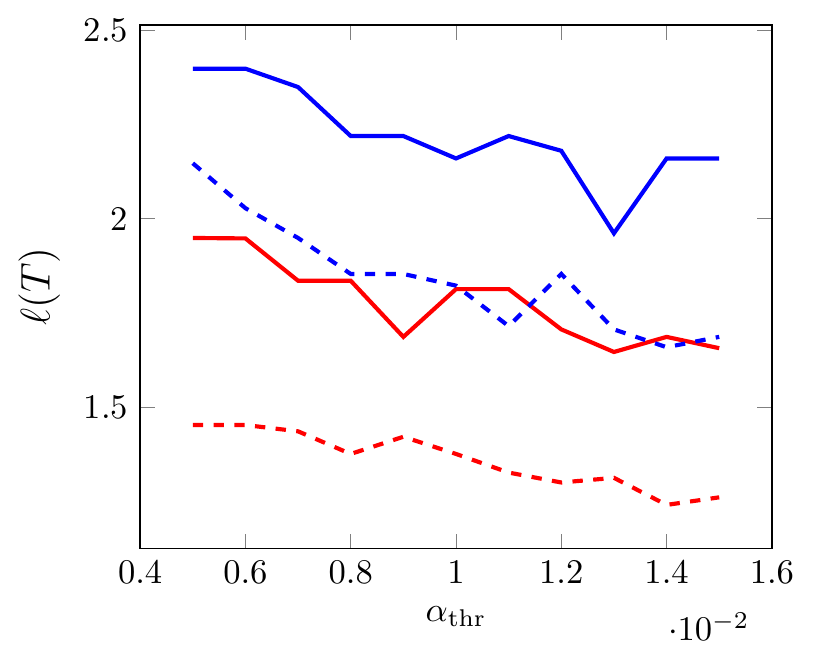}
		\caption{Set--NLM: $Q = 0.01$ -- solid lines, $Q = 0.5$ -- dashed lines, $\eta = 1$ -- blue lines, and $\eta =2$ -- red lines.}
	\end{subfigure}    
	\caption{Dependence of $\ell(T)$, where $T = 20$ on $\athr$.}
	\label{fig:thr_rad}
\end{figure}

The choice of threshold value, $\athr$,  influences the evolution of the tumour radius and hence, by extension, the other variables. We cannot choose the threshold value to be too large or too small. Such a choice will incur a cascading array of high errors on the tumour radius and other variables as the time increases. A very small threshold value implies that the volume fraction is too small on triangles closer to the boundary, thus forcing the velocity--pressure system to be singular.  The variation of tumour radius at the time $T = 20$ with respect to the threshold value over the range $[0.005,0.015]$ for Set--NUM and Set--NLM experiments is provided in Figure~\ref{fig:thr_rad}. The radius varies by a maximum of about 15\% for Set-NUM and 20\% for Set-NLM as the threshold value varies from $0.005$ to $0.015$. Therefore, deviation in the tumour radius with respect to the threshold value is present. But, with a proper choice of the threshold value, it is possible to minimise the error in the tumour radius from the exact value~\cite{Remesan2019}. Moreover, one of the main motivations for simulating cancer growth is perhaps not to get an extremely accurate representation of the tumour radius, but more to study the effect of drugs; in this situation, the simulation of the current model would serve as a baseline, to be compared with simulations obtained with a model including said drug effect, and run using the same threshold value.

\section{Conclusions}
\label{sec:conclusion}
In this paper, a mathematically well-defined model is developed which can replicate the evolution of an avascular tumour that grows from a variety of initial geometries. The equivalent formulation in Section~\ref{sec:weak_equiv} and Theorem~\ref{thm:eq_thm_1} yield a framework to design a numerical scheme that does not require explicit tracking of the time-dependent boundary associated with the tumour. The tumour domain is recovered as the union of all triangles in which the volume fraction of the tumour is greater than a fixed threshold value. While implementing the scheme, a multitude of factors, like the nature of triangulation and the threshold value need to be taken into account. For instance, we illustrate by an example the mesh-locking effect associated with the use of structured triangulations and the advantage of using a random triangulation. The numerical results for both NUM and NLM models support the heuristic expectations and results from previous literature~\cite{breward_2002,hubbard}. The tests also illustrate the nutrient dependent growth of the tumour as in Figure~\ref{fig:tumour_nlm}. In addition to this, the numerical scheme seamlessly deals with the complex tumour geometries in Figure~\ref{fig:irregular_growth}, including initially disconnected tumour groups that merge later on. The numerical results justify the ability of the scheme to take care of different irregular tumour geometries and topological structures, which in turn shows its practical applicability in simulating tumour growth from real-time clinical data.  As such, the work presented here could be extended to quantify the effect of drug treatment on an evolving tumour. 

\subsubsection*{Acknowledgement}
The authors are grateful to Prof. Neela Nataraj, Indian Institute of Technology Bombay, India for the valuable suggestions and help. The authors are grateful to Dr. Laura Bray, Queensland University of Technology, Australia and Ms Berline Murekatete, Queensland University of Technology, Australia for helpful discussions and providing image data for the irregular tumour depicted in Figure~\ref{fig:vfrac_irr_1}.

\subsubsection*{Data availability statement}
The datasets -- specifically, MATLAB code for NUM simulations -- generated during and/or analysed during the current study are available in the GitHub repository,\\ \href{https://github.com/gopikrishnancr/2D_tumour_growth_FEM_FVM}{\texttt{https://github.com/gopikrishnancr/2D\_tumour\_growth\_FEM\_FVM.}}

\bibliographystyle{plain}
\bibliography{2d_ref}

\appendix
\section*{Appendix}
\label{sec:appen}
\renewcommand{\thesubsection}{\Alph{subsection}}
\subsection{Some classical definitions and results}
We recall two classical results  used in this article. 
\label{classical}
\begin{enumerate}[label= \alph*.,ref=\ref{classical}.(\alph*)]
	\item \label{korns} \textbf{Theorem (Korn's second inequality). \cite[Theorem 3.78]{alexander}.}
	If $\Omega \subset \mathbb{R}^d$, where $d =2,3$ is a domain, then there exists a positive constant $\mathscr{C}_K$ such that, for every $\boldsymbol{v} \in \mathbf{H}^1_d(\Omega)$,
	\begin{equation}
	\mathscr{C}_K ||{\boldsymbol v}||_{1,\Omega} \le ||\nabla_s{\boldsymbol v}||_{0,\Omega} + ||{\boldsymbol v}||_{0,\Omega}.
	\end{equation}
	\item \label{tartar} \textbf{Lemma (Petree-Tartar). \cite[Lemma A.38]{alexander}.}
	If $X,\,Y,\,$ and $Z$ are Banach spaces,  $A : X \rightarrow Y$ is an injective operator, $T : X \rightarrow Z$ is a compact operator, and there exists a positive constant $\mathscr{C}_{1}$ such that $\mathscr{C}_{1} ||x||_X \le ||Ax||_Y + ||Tx||_Z$, then there exists a positive constant $\mathscr{C}_{PT}$ such that $\mathscr{C}_{PT} ||x||_X \le ||Ax||_Y$.
	\item \textbf{Definition (Bounded variation).}\label{def:bv} By the space $BV(A)$, where $A \subset \mathbb{R}^d$ is an open set we mean the collection of all functions $u : A \rightarrow \mathbb{R}$ such that $||u||_{BV} < \infty$, where 
	\begin{equation}
	||u||_{BV} := \sup\left\{\int_{A} u\,\mathrm{div}(\varphi)\,\vx : \varphi \in \mathscr{C}_c^1(A;\mathbb{R}^d), ||\varphi||_{L^\infty(A)} \le 1 \right\}.
	\end{equation}
\end{enumerate}
\end{document}